\numberwithin{equation}{section}
\font\tengothic=eufm10 scaled\magstep 1 \font\sevengothic=eufm7
\def\goth#1{{\fam\gothicfam #1}}
\newtheorem{theorem}{Theorem}[section]
\newtheorem{proposition}[theorem]{Proposition}
\newtheorem{corollary}[theorem]{Corollary}
\theoremstyle{definition}
\newtheorem{definition}[theorem]{Definition} % \theoremstyle{remark}
\newtheorem{remark}[theorem]{Remark}
\newtheorem{example}[theorem]{Example}
\newcommand{\codim}{\operatorname{codim}}
\newcommand{\coker}{\operatorname{coker}}
\newcommand{\deter}{\operatorname{det}}
\newcommand{\degr}{\operatorname{deg}}
\newcommand{\Hom}{\operatorname{Hom}}
\newcommand{\ext}{\operatorname{ext}}
\newcommand{\Ext}{\operatorname{Ext}}
\newcommand{\Spec}{\operatorname{Spec}}
\newcommand{\depth}{\operatorname{depth}}
\newcommand{\im}{\operatorname{im}}
\newcommand{\HH}{H}
\newcommand{\Hi}{\operatorname{Hilb}}
\newcommand{\Gr}{\operatorname{GradAlg}}
\newcommand{\Proj}{\operatorname{Proj}}
\newcommand\sO{{\mathcal O}}
\newcommand{\cA}{{\mathcal A}}
\newcommand{\cU}{{\mathcal U}}
\newcommand{\cO}{{\mathcal O}}
\newcommand{\cN}{{\mathcal N}}
\newcommand {\QQ}{\mathbb{Q}}
\newcommand {\PP}{\mathbb{P}}
\newcommand {\ra}{\longrightarrow}
\begin{document}
\title[]{Ideals generated by submaximal minors.}

\author[Jan O.\ Kleppe, Rosa M.\ Mir\'o-Roig]{Jan O.\ kleppe, Rosa M.\
Mir\'o-Roig$^{*}$}
%\author[]{}
\address{Faculty of Engineering,
         Oslo University College,
         Postboks 4, St. Olavs Plass, N-0130 Oslo,
         Norway}
\email{JanOddvar.Kleppe@iu.hio.no}
\address{Facultat de Matem\`atiques,
Departament d'Algebra i Geometria, Gran Via de les Corts Catalanes
585, 08007 Barcelona, SPAIN } \email{miro@ub.edu}

\date{\today}
\thanks{$^*$ Partially supported by MTM2007-61104.}

\subjclass{Primary 14M12, 14C05, 14H10, 14J10; Secondary 14N05}

%%%%%%%%%%%%%%%%%%%%%%%%%%%%%%%%

\begin{abstract}
The goal of this paper is to study irreducible families
$W^{t-1}_{t,t}(\underline{b};\underline{a})$ of codimension 4,
arithmetically Gorenstein schemes $X\subset \PP^n$ defined by the
submaximal minors of a $t\times t$ homogeneous matrix $\cA $ with
entries homogeneous forms of degree $a_j-b_i$. Under some
numerical assumption on $a_j$ and $b_i$ we prove that the closure
of $W^{t-1}_{t,t}(\underline{b};\underline{a})$ is an irreducible
component of $\Hi ^{p(x)}(\PP^{n})$, we show that $\Hi
^{p(x)}(\PP^{n})$ is generically smooth along
$W^{t-1}_{t,t}(\underline{b};\underline{a})$ and we compute the
dimension of $W^{t-1}_{t,t}(\underline{b};\underline{a})$ in terms
of $a_j$ and $b_i$. To achieve these results we first prove that
$X$ is determined by a regular section of  ${\mathcal
I}_Y/{\mathcal I}_Y^2(s)$ where $s=\deg(\det(\cA))$ and $Y\subset
\PP^n$ is a codimension 2, arithmetically Cohen-Macaulay scheme
defined by the maximal minors of the matrix  obtained deleting a
suitable row of $\cA $.
\end{abstract}

%%%%%%%%%%%%%%%%%%%%%%%%%%%%%%%

\maketitle

\tableofcontents

%%%%%%%%%%%%%%%%%%%%%%%%%%%%%%%%%%%%%%%%%%%%%%%%%

  \section{Introduction} \label{intro}

In this paper we deal with determinantal schemes. A scheme
$X\subset \PP^{n}$ of codimension $c$ is called {\em
determinantal} if its homogeneous saturated ideal can be generated
by the $r \times r$ minors of a homogeneous $p \times q$ matrix
with $c=(p-r+1)(q-r+1)$. When $r=min(p,q)$ we say that $X$ is {\em
standard determinantal}. Given integers $r\le p \le q$, $a_1\le
a_2\le ...\le a_{p}$ and $b_1\le b_2\le ...\le b_q$ we denote by
$W^r_{p,q}(\underline{b};\underline{a})\subset \Hi
^{p(x)}(\PP^{n})$ the locus of  determinantal schemes $X\subset
\PP^{n}$ of codimension $c=(p-r+1)(q-r+1)$ defined by the $r\times
r$ minors of a $p\times q$ matrix
$(f_{ji})^{i=1,...,q}_{j=1,...,p}$ where $f_{ji}\in
k[x_0,x_1,...,x_{n}]$ is a homogeneous polynomial of degree
$a_j-b_i$.

The study of determinantal schemes has received considerable
attention in the literature (See, for instance, \cite{b-v},
\cite{e-h}, \cite{e-n} and \cite{rmmr}). Some classical schemes
that can be constructed in this way are the Segre varieties,
rational normal scrolls, and the Veronese varieties. The main goal
of this paper is to contribute to the classification of
determinantal schemes and we  address  in the case $p=q=t$,
$r=t-1$ the following  three fundamental problems:
\begin{itemize} \item[(1)] To determine the dimension of
$W^r_{p,q}(\underline{b};\underline{a})$ in terms of $a_j$ and
$b_i$, \item[(2)] To determine whether the closure of
$W^r_{p,q}(\underline{b};\underline{a})$ is an irreducible
component of $\Hi ^{p(x)}(\PP^{n})$, and \item[(3)] To determine
when $\Hi ^{p(x)}(\PP^{n})$ is generically smooth along
$W^r_{p,q}(\underline{b};\underline{a})$.
\end{itemize}

 The first
important contribution to this problem is due to G. Ellingsrud
\cite{elli}; in 1975, he proved that every arithmetically
Cohen-Macaulay, codimension 2 closed subscheme $X$ of $\PP^{n}$ is
unobstructed (i.e. the corresponding point in the Hilbert scheme
$\Hi ^{p(x)} (\PP^{n})$ is smooth) provided $n\ge 3$ and he also
computed the dimension of the Hilbert scheme at $(X)$. Recall also
that the homogeneous ideal of an arithmetically Cohen-Macaulay,
codimension 2 closed subscheme $X$ of $\PP^{n}$ is given by the
maximal minors of a $(t-1)\times t$ homogeneous matrix, the
Hilbert-Burch matrix. That is, such a scheme is standard
determinantal. The purpose of this work is to extend Ellingsrud's
Theorem, viewed as a statement on standard determinantal schemes
of codimension 2, to arbitrary determinantal schemes. The case of
 codimension 3 standard determinantal schemes,
 was mainly solved in \cite{KMMNP}; Proposition 1.12; and the case of standard
 determinantal schemes of arbitrary codimension was studied and partially solved
  in \cite{KM}.
In \cite{KM1}, we treated the case of codimension 3 determinantal
schemes $X\subset \PP^n$ defined by the submaximal minors of a
symmetric homogeneous matrix. In our opinion, it is difficult to
solve the above three questions in full generality and, in this
paper, we will focus our attention to the first unsolved case,
namely, we will deal with codimension 4 determinantal schemes
$X\subset \PP^n$, $n\ge 5$, defined by the submaximal minors of a
homogeneous square matrix. As in \cite{KMMNP}, \cite{KM} and
\cite{KM1}, we prove our results by considering the smoothness of
the Hilbert flag scheme of pairs or, more generally, the Hilbert
flag scheme of chains of closed subschemes obtained by deleting
suitable rows, and its natural projections into the usual Hilbert
scheme. We wonder if a similar strategy could facilitate the study
of the general case.

\vskip 2mm Next we outline the structure of the paper. In section 2, we recall
the basic facts on local cohomology and deformation theory needed in the
sequel. In section 3, we describe the deformations of the codimension 4
arithmetically Gorenstein schemes $X\subset \PP^n$ defined as the degeneracy
locus of a regular section of the twisted conormal sheaf ${\mathcal
  I}_Y/{\mathcal I}_Y^2(s)$ of a codimension 2, arithmetically Cohen-Macaulay
scheme $Y \subset \PP^n$ of dimension $\ge 3$. Section 4 is the
heart of the paper. In section 4, we determine the dimension of
$W^{t-1}_{t,t}(\underline{b};\underline{a})$ in terms of $b_i$ and
$a_j$ provided $a_i\ge b_{i+3}$ for $1\le i\le t-3$ (and $a_1\ge
b_t$ if $t\le 3$), $a_t>a_{t-1}+a_{t-2}-b_1$ and $\dim X\ge 1$. We
also prove that, under this numerical restriction, $\Hi
^{p(x)}(\PP^{n})$ is generically smooth along
$W^{t-1}_{t,t}(\underline{b};\underline{a})$ and the closure of
$W^{t-1}_{t,t}(\underline{b};\underline{a})$ is an irreducible
component of $\Hi ^{p(x)}(\PP^{n})$ (cf. Theorem \ref{mainthm1}).

\vskip 2mm The key point for proving our result is the fact that
any codimension 4, determinantal scheme $X\subset \PP^n$ defined
by the submaximal minors of a homogeneous square matrix $\cA $ is
arithmetically Gorenstein and determined by a regular section of
${\mathcal I}_Y/{\mathcal I}_Y^2(s)$ where $s=\deg(\det(\cA))$ and
$Y\subset \PP^n$ is a codimension 2, arithmetically Cohen-Macaulay
scheme defined by the maximal minors of the matrix $\cN $ obtained
deleting a suitable row of $\cA $ (cf. Proposition
\ref{sectionnormal}). Conversely, any codimension 4,
arithmetically Gorenstein scheme $X=\Proj(A)\subset \PP^n$ defined
by a regular section $\sigma $ of ${\mathcal I}_Y/{\mathcal
I}_Y^2(s)$ where $Y=\Proj(B)\subset \PP^n$ is a codimension 2,
arithmetically Cohen-Macaulay scheme, fits into an exact sequence
of the following type
$$0\longrightarrow K_B(n+1-2s)\longrightarrow N_B(-s)\stackrel{\sigma^*}{\longrightarrow} B\longrightarrow A\longrightarrow 0 $$
and it is determined by the submaximal minors of
 a $t\times t$ homogeneous
matrix $\cA $ obtained by adding a suitable row to the
Hilbert-Burch matrix of $Y$ (cf. Proposition \ref{sectionnormal}).
 In the last section, we include
some examples which illustrate that the numerical hypothesis in
Theorem \ref{mainthm1}, $a_t>a_{t-1}+a_{t-2}-b_1$, cannot be
avoided.

 \vskip 4mm

{\bf Notation.} Throughout this paper $k$ will be an algebraically
closed field $k$, $R=k[x_0, x_1, \dots ,x_n]$, $\goth m= (x_0,
\dots ,x_n)$ and $\PP^n=\Proj(R)$. As usual, the sheafification of
a graded $R$-module $M$ will be denoted by $\widetilde{M}$ and the
support of $M$ by $Supp(M)$.

\vskip 2mm Given a closed subscheme $X$ of $\PP^n$ of codimension
$c$, we denote by  ${\mathcal I}_X$ its ideal sheaf,  ${\mathcal
N}_X$ its normal sheaf and $ I(X)=H^0_{*}(\PP^n, {\mathcal I}_X)$
its saturated homogeneous ideal unless $X=\emptyset $, in which
case we let $I(X)=\goth m$ . If $X$ is equidimensional and
Cohen-Macaulay of codimension $c$, we set $\omega_X ={\mathcal
E}xt^c_{{\mathcal O}_{\PP^n}} ({\mathcal O}_X,{\mathcal
O}_{\PP^n})(-n-1)$ to be its canonical sheaf.

 In the sequel, for any graded quotient $A$ of $R$ of codimension $c$,
we let $I_A=\ker(R\twoheadrightarrow A)$, $N_A=\Hom_R(I_A,A)$ be
the normal module. If $A$ is Cohen-Macaulay of codimension $c$, we
let  $K_A=\Ext^c_R (A,R)(-n-1)$ be its canonical module. When we
write $X=\Proj(A)$, we let $A=R/I(X)$ and $K_X=K_A$. If $M$ is a
finitely generated graded $A$-module, let $\depth_{J}{M}$ denote
the length of a maximal $M$-sequence in a homogeneous ideal $J$
and let $\depth {M} = \depth_{\mathfrak m}{ M}$. If
$\Gamma_{J}(-)$ is the  functor of sections with support in $
\Spec (A/J)$, we denote by $\HH^i_{J}(-)$ the right derived
functor of $\Gamma_{J}(-)$.

\vskip 2mm Let $\Hi ^{p(x)}(\PP^n)$ be the Hilbert scheme
parameterizing closed subschemes $X$ of $\PP^n$ with Hilbert
polynomial $p(x)\in \QQ[x]$ (cf. \cite{G}). By abuse of notation
we will write $(X)\in \Hi ^{p(x)}(\PP^n)$ for the $k$-point which
corresponds to a closed subscheme $X\subset \PP^n$. The Hilbert
polynomial of $X$ is sometimes denoted by $p_X$. By definition $X$
is called unobstructed if $\Hi ^{p(x)}(\PP^n)$ is smooth at $(X)$.

The pullback of the universal family on $ \Hi ^p(\PP^n)$ via a
morphism $\psi:W\ra   \Hi ^p(\PP^n)$ yields a flat family over
$W$, and we will write $(X)\in W$ for a member of that family as
well. Suppose that $W$ is irreducible. Then, by definition a
general $(X)\in W$ has a certain property if there is a non-empty
open subset $U$ of $W$ such that all members of $U$ have this
property. Moreover, we say that $(X)$ is general in $W$ if it
belongs to a sufficiently small open subset $U$ of $W$ (small
enough to satisfy all the openness properties that we want to
require).

Finally we let $D=D(p_X,p_Y)$ be the Hilbert flag scheme
parameterizing pairs of closed subschemes $(X' \subset Y')$ of
$\PP^{n}$ with Hilbert polynomials $p_{X'}=p_X$ and $p_{Y'}=p_Y$,
respectively.

%%%%%%%%%%%%%%%%%%%%%%%%%%%%%%%%%%%%%%%%%%%%%%%%%%%%%%%%%%%%%%%%%%%%%%%%%%%%%

\section{Preliminaries}

For convenience of the reader we include in this section the
background and basic results on local cohomology and deformation
theory needed in the sequel.

\subsection{Local cohomology}

Let $B=R/I_B$ be a graded quotient of the polynomial ring $R$, let
$M$ and $N$ be finitely generated graded $B$-modules and let $J
\subset B$ be an ideal. We say that $0\ne M$ is a Cohen-Macaulay
(resp. maximal Cohen-Macaulay) $B$-module if  $\depth {M}= \dim
{M}$ (resp. $\depth {M}= \dim {B}$), or equivalently,
$\HH_{\mathfrak m}^i({ M})=0$  for all $i\ne \dim {M}$ (resp.
$i<\dim {B}$) since $\depth_{J}{M} \geq r $ is equivalent to $
\HH^i_{J}({M})=0 {\ \rm for \ } i<r$. If $B$ is Cohen-Macaulay, we
know by Gorenstein duality that the $v$-graded piece of
$\HH_{\mathfrak m}^i({ M})$ satisfies
$$_{v}\HH_{\mathfrak{m}}^{i}(M) \simeq \
_{-v}\!\Ext_B^{\dim B-i}(M,K_B)^{\vee}.$$ Let $Z$ be closed in
$Y:=\Proj ({B})$ and let $U=Y-Z$. Then we have an exact sequence
$$0 \rightarrow \HH^0_{I(Z)}(M) \rightarrow M \rightarrow
\HH^0_*(U,\widetilde{M}) \rightarrow \HH^1_{I(Z)}(M) \rightarrow
0$$ and isomorphisms $\HH^i_{{I}(Z)}({M}) \simeq
\HH^{i-1}_*(U,\widetilde {M})\ {\rm for} \ i \geq 2$ where as
usual we write $\HH^{i}_*(U,\widetilde {M})=\oplus
_t\HH^{j}(U,\widetilde {M}(t))$. More generally, if
$\depth_{I(Z)}N \geq i+1$ there is an exact sequence
\begin{equation} \label{globalext}
 \small
 _0\!\Ext_B^i(M,N) \hookrightarrow \Ext_{\sO_U}^i(\widetilde M
 \arrowvert_U,\widetilde N \arrowvert_U)  \rightarrow \ _0\!\Hom_B(M,
 \HH^{i+1}_{I(Z)}(N)) \rightarrow \  _0\!\Ext_B^{i+1}(M,N)
  \rightarrow
\end{equation}
by \cite{SGA2}; exp. VI, where the middle form comes from a
spectral sequence also treated in \cite{SGA2}.

\subsection{Basic deformation theory} To use deformation theory, we will need to consider the
(co)homology groups of algebras $\HH_2(R,B,B)$ and $\HH^2(R,B,B)$.
Let us recall their definition.  We consider
\begin{equation} \label{res}
 ...\rightarrow  F_2:= \oplus _{j=1}^{\mu_2}R(-n_{2,j}) \rightarrow  F_1:=
  \oplus_{i=1}^{\mu } R(-n_{1,i}) \rightarrow R \rightarrow B \rightarrow 0  \
\end{equation}
 a minimal graded free $R$-resolution of $B$ and let $\HH_1=\HH_1(I_B)$ be the
 first
Koszul homology built on a set of minimal generators of $I_B$.
Then we may take the exact sequence
\begin{equation} \label{alghom}
   0 \rightarrow \HH_2(R,B,B) \rightarrow  {\HH}_1 \rightarrow
  {F}_{1} \otimes_{R} {B} \rightarrow {I_B}/{I_B}^2 \rightarrow 0
\end{equation}
as a definition of the second algebra homology $\HH_2(R,B,B)$ (cf.
\cite{VAS}), and the dual sequence, $$ \rightarrow \
_v\!\Hom_B(F_1 \otimes B,B)\rightarrow \ _v\!\Hom_B(\HH_1,B)
\rightarrow \ _v\HH^2(R,B,B) \rightarrow 0,$$ as a definition of
graded second algebra cohomology $\HH^2(R,B,B)$. If $B$ is
generically a complete intersection, then it is well known that $\
\Ext_B^1(I_B/I_B^2,B) \simeq \ \HH^2(R,B,B)$ (\cite{AND};
Proposition 16.1). We also know that $ \HH^0(Y,\cN_Y)$ is the
tangent space of $ \Hi^{ p(x)} (\PP^n)$ in general, while $
\HH^1(Y,\cN_Y)$ contains the obstructions of deforming $Y \subset
\PP^n$ in the case $Y$ is locally a complete intersection (l.c.i.)
(cf. \cite{G}). If $\ _0\!\Hom_R (I_B,\HH_{\mathfrak m}^1(B)) = 0$
(e.g. $\depth_{\mathfrak m}B \geq 2$), we have by
\eqref{globalext} that $\ _0\!\Hom_B(I_{B}/I_B^2,B) \simeq \
\HH^0(Y,\cN_Y)\ $ and $\ _0\HH^2(R,B,B) \hookrightarrow
\HH^1(Y,\cN_Y)$ is injective in the l.c.i. case, and that $\
_0\HH^2(R,B,B)$ contains the obstructions of deforming $Y \subset
\PP^n$ (\cite{K79}; Remark 3.7). Thus $\ _0\HH^2(R,B,B)=0$
suffices for the unobstructedness of a l.c.i. arithmetically
Cohen-Macaulay subscheme $Y$ of $\PP^n$ of $\dim Y \geq 1$ (for
this conclusion we may even entirely skip ``l.c.i.'' by slightly
extending the argument, as done in \cite{K79}).

\subsection{Useful exact sequences} In the last part of this section, we
collect some exact sequences frequently used in this paper, in the
case that $B=R/I_B$ is a generically complete intersection
codimension two CM quotient of $R$. First, applying $\Hom_R(-,R)$
to the minimal graded free $R$-resolution of $B$
\begin{equation} \label{rescodim2}
 0 \rightarrow  F_2:= \oplus _{j=1}^{\mu -1}R(-n_{2,j}) \rightarrow  F_1:=
  \oplus_{i=1}^{\mu } R(-n_{1,i}) \rightarrow R \rightarrow B \rightarrow 0
  \,
\end{equation}
we get a minimal graded free $R$-resolution of $K_B$:
 \begin{equation} \label{Kcodim2}
  0 \rightarrow R \rightarrow  \oplus R(n_{1,i}) \rightarrow
  \oplus R(n_{2,j}) \rightarrow K_B(n+1) \rightarrow 0.
\end{equation}
If we apply $\Hom(-,B)$ to \eqref{Kcodim2} we get the exactness to
the left in the exact sequence
\begin{equation}  \label{K*codim2}
 0 \rightarrow K_B(n+1)^* \rightarrow \oplus
  B(-n_{2,j}) \rightarrow \oplus B(-n_{1,i})\rightarrow I_B/I_B^2 \rightarrow 0
\end{equation}
which splits into two short exact sequences ``via $ \oplus
B(-n_{2,j}) \twoheadrightarrow H_1 \hookrightarrow \oplus
B(-n_{1,i})$'', one of which is \eqref{alghom} with $ \HH_2(R,B,B)
=0$. Indeed since $H_1$ is Cohen-Macaulay by  \cite{AH}, we get $
\HH_2(R,B,B) =0$ by (\ref{alghom}). Moreover since $
\Ext_R^1(I_B,I_B) \simeq N_B$ we showed in \cite{K03}; pg. 788
that there is an exact sequence of the form
 \begin{equation} \label{NBcodim2}
  0 \rightarrow F_1^* \otimes_R F_2 \rightarrow  (( F_1^* \otimes_R F_1)
  \oplus (F_2^* \otimes_R F_2))/R \rightarrow
  F_2^* \otimes_R F_1 \rightarrow N_B \rightarrow 0 \
\end{equation}
where $F_{i}^*=\Hom_R(F_{i},R)$. Indeed this sequence is deduced
from the exact sequence
$$ 0 \rightarrow R \rightarrow \oplus I_B(n_{1,i}) \rightarrow
\oplus I_B(n_{2,j}) \rightarrow N_B \rightarrow 0$$ which we get by applying
$\Hom_R(-,I_B)$ to \eqref{rescodim2}, (cf. \cite{K03};\! (26)). Similarly
applying $\Hom_R(-,I_B/I_B^2)$ to \eqref{rescodim2} and noting that
$\Hom_R(I_B,I_B/I_B^2) \simeq \Hom_B(I_B/I_B^2,I_B/I_B^2)$ we get the exact
sequence
  \begin{equation} \label{NB2codim2}
  0 \rightarrow \Hom_B(I_B/I_B^2,I_B/I_B^2) \rightarrow \oplus
  I_B/I_B^2(n_{1,i}) \rightarrow \oplus I_B/I_B^2 (n_{2,j}) \rightarrow N_B
  \rightarrow 0.
\end{equation}
Finally we recall the following frequently used exact sequence (cf. \cite{VAS})
 \begin{equation} \label{I2codim2}
  0 \rightarrow \wedge^2( \oplus R(-n_{2,j})) \rightarrow  (\oplus R(-n_{1,i}))
  \otimes ( \oplus R(-n_{2,j})) \rightarrow
  S^2( \oplus R(-n_{1,i})) \rightarrow I_B^2 \rightarrow 0 \ .
\end{equation}

\section{Deformations of quotients of regular sections}

In \cite{K} the first author studied deformations of a scheme $X:=\Proj(A)$
defined as the degeneracy locus of a regular section of a ``nice'' sheaf
${\widetilde { M}}$ on an arithmetically Cohen-Macaulay (ACM) scheme
$Y=\Proj(B)$. Recall that if we take a regular section of the anticanonical
sheaf $\widetilde K_B^*(s)$ and $Y$ is a l.c.i. of positive dimension, then
we get an exact sequence
\begin{equation*} \label{kb}
 0 \rightarrow K_B(-s) \rightarrow B \rightarrow A \rightarrow 0,
\end{equation*}
in which $A$ is Gorenstein. Indeed the mapping cone construction leads to a
resolution of $A$ from which we easily see that $A$ is Gorenstein. In
\cite{KP}, we generalized this way of constructing Gorenstein algebras to
sheaves of higher rank and,  in \cite{K}, we studied the deformations of this
``construction'', notable in the rank two case which we now recall.

Let $M$ be a maximal Cohen-Macaulay $B$-module of rank $r=2$ such that
$\widetilde {M}\arrowvert_U $ is locally free and $\wedge^2{\widetilde {
    M}}\arrowvert_U \simeq \widetilde {K_B}(t)\arrowvert_U$ in an open set
$U:=Y-Z$ of $Y$ satisfying $\depth_{I(Z)}B \geq 2$. Then a regular section
$\sigma$ of $\widetilde{M}^*(s)\arrowvert_U$ defines an arithmetically
Gorenstein scheme $X=\Proj(A)$ given by the exact sequence
\begin{equation} \label{introMs} 0 \rightarrow K_B(t-2s)\rightarrow M(-s)
  \stackrel{\sigma^*}{\rightarrow} B \rightarrow A \rightarrow 0 \
\end{equation}
and $M \simeq \Hom_B(M,K_B(t))$ (\cite{KP}; Theorem 8). In this
paper we consider and further develop the case where $M = N_B$ and
$\dim B = n-1$ ($n+1=\dim R$, $n\ge 5$). By \cite{KP}; Proposition
13, $N_B$ is a maximal Cohen-Macaulay $B$-module and we have the
exact sequence
\begin{equation} \label{Ns} 0 \rightarrow K_B(n+1-2s)\rightarrow N_B(-s)
  \rightarrow I_{A/B} \rightarrow 0 \ \hspace{5pt}, \hspace{5pt} {\rm where} \
  \ I_{A/B}:=\ker(B \rightarrow A).
\end{equation}

\begin{example}
  Set $R=k[x_0,\cdots ,x_5]$ and let $B=R/I_B$ be a
  codimension two quotient with minimal resolution
\begin{equation*} \label{I}
 0 \rightarrow R(-3)^2 \rightarrow  R(-2)^3  \rightarrow R
 \rightarrow B \rightarrow 0  \
\end{equation*}
and suppose $Y= \Proj(B)$ is a l.c.i in $ \mathbb P^5$. Let $A$ be given by a
regular section of $\widetilde {I_B/I_B^2}(s)$, $s \ge
3$.
Thanks to the exact sequences \eqref{Kcodim2} and \eqref{NBcodim2} and the mapping cone
construction applied to both \eqref{Ns} and $ 0 \to I_{A/B} \to B \rightarrow
A \to 0 $, we get the following resolution of the Gorenstein algebra $A$,
\begin{equation*} \label{NBA}
 \begin {aligned}
  & 0 \rightarrow R(-2s) \rightarrow  R(2-2s)^3 \oplus R(-1-s)^6 \\
 & \rightarrow R(3-2s)^2 \oplus R(-s)^{12} \oplus R(-3)^2 \rightarrow
 R(1-s)^6 \oplus R(-2)^3  \rightarrow R \rightarrow A
 \rightarrow 0 \ .
 \end{aligned}
\end{equation*}
Indeed $X = \Proj(A)$ is an arithmetically Gorenstein curve of
degree $d = 3s^2-10s+9$ and arithmetic genus $g =1+d(s-3)$ in $
\mathbb P^5$ (see \cite{K}; Example 43).
\end{example}

With $M$ and $A$ as above, it turns out that \cite{K}; Theorem 1
and Theorem 25 describe the deformations space, $\Gr(R)$, of the
graded quotient $A$ and computes the dimension of $\Gr(R)$ in
terms of a number $\delta:=\delta(K_B)_{t-2s} - \delta(M)_{-s}$
where
\begin{equation} \label{delta} \delta(N)_{v}: =\ _{v}\!\hom_B(I_B/I_B^2,N)-\
  _{v}\!\ext_B^1(I_B/I_B^2,N).
\end{equation}
Here we have used small letters for the $k$-dimension of $_v\!\Ext_B^i(-,-)$
and of similar groups. If we suppose $M=N_B$, $\depth_{I(Z)} B \geq 4$ and
$char(k) \neq 2$, then the conditions of the A (resp.\! B)-part of \cite{K}; Theorem 25
are satisfied provided $\ _{0}\!\Ext^2_B(N_B,N_B)=0$ (resp. $\
_{-s}\!\Ext^1_B(I_B/I_B^2,N_B)=0$). In both cases $X$ is unobstructed and
 \begin{equation} \label{dimHilb}
 \dim_{(X)} \Hi^{p(x)}(\PP^{n})= \ \dim (N_B)_0 + \ \dim (I_B/I_B^2)_s - \
 _{0}\!\hom_B(I_B/I_B^2,I_B/I_B^2)+ \ \dim(K_B)_{t-2s} +\delta
\end{equation}
where $t=n+1$ (see \cite{K}; Corollary 41 and its proof, and
Remark 42). Using the exact sequence \eqref{NBcodim2} we get $\
_{-s}\!\Ext^1_B(I_B/I_B^2,N_B)=0$ for $s > 2 \max n_{2,j}- \min
n_{1,i}$ which led to Corollary 41 of \cite{K} which we slightly
generalize in Corollary~\ref{introcorNB} (i) below. The A)-part
was considered in Remark 42 of \cite{K}. By the proof of Theorem
25 of \cite{K} we may replace the vanishing of $\
_{0}\!\Ext^2_B(N_B,N_B)$ by the vanishing of the subgroup $\
_{t}\!\Ext_B^2(S^2(I_{A/B}(s)),K_B)$ and we still get all
conclusions of the A)-part. Therefore, we can also prove (ii) of
the following corollary to Theorem 25 of \cite{K}.

\begin{corollary} \label{introcorNB} Let $B=R/I_B$ be a codimension two CM
  quotient of $R$, let $U= \Proj(B)-Z \hookrightarrow \PP^n$ be a l.c.i. and
  suppose $\depth_{I(Z)} B \geq 4$. Let $A$ be given by a regular section of
  $\widetilde{N_B}^*(s)$ on $U$, let $\eta(v):= \dim (I_B/I_B^2)_v$, and put
  $$\epsilon: = \eta(s)+ \sum_{j=1}^{\mu -1} \eta( n_{2,j})-\sum_{i=1}^{\mu}
  \eta( n_{1,i}). \ $$

   i) Let $j_0$ satisfy $n_{2,j_0} = \max n_{2,j}$. If $s > n_{2,j_0} +
   \max_{j \ne j_0} n_{2,j} - \min n_{1,i}$ and $char(k) \neq 2$, then $X$ is
   a $p_Y$-generic unobstructed arithmetically Gorenstein subscheme of $
   \PP^n$ of codimension 4 and $\ \dim_{(X)} \Hi^{p(x)}(\PP^{n})= \epsilon$.

   ii) If $\ _s\!\Ext_B^1(N_B,A)=0$, $char(k)=0$, $s> \max n_{2,j}/2 $ and $(X
   \subset Y)$ is general, then $X$ is unobstructed, $ \ \dim_{(X)}
   \Hi^{p(x)}(\PP^{n}) = \epsilon + \delta$ and the codimension of the stratum
   in $\Hi^{p(x)}(\PP^{n})$ of subschemes given by (\ref{introMs}) is
   $_{0}\!\ext^1_B(I_B/I_B^2,I_{A/B})$. Moreover if $s > \max n_{2,j}+\max
   n_{1,i}-\min n_{1,i}$ (resp. $s> \max n_{2,j}$), then $$\
   _{0}\!\ext^1_B(I_B/I_B^2,I_{A/B})=\ _{-s}\!\ext_B^{1}(I_B/I_B^2,N_B)=
   \delta \ , $$ (resp. $\ _{0}\!\ext^1_B(I_B/I_B^2,I_{A/B})=\
   _{-s}\!\ext_B^{1}(I_B/I_B^2,N_B)$).
 \end{corollary}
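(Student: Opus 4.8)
The plan is to derive the corollary from \cite{K}; Theorem 25, whose A)- and B)-parts have, in the discussion preceding the statement, already been reduced --- in the case $M=N_B$, $\depth_{I(Z)}B\ge 4$, $char(k)\ne 2$ --- to the vanishing of $\ _{0}\!\Ext^2_B(N_B,N_B)$ and of $\ _{-s}\!\Ext^1_B(I_B/I_B^2,N_B)$ respectively, together with the dimension formula \eqref{dimHilb}. My first step is to rewrite \eqref{dimHilb} in terms of $\epsilon$. Taking the degree-zero part of the four-term exact sequence \eqref{NB2codim2} and comparing alternating sums of $k$-dimensions gives $\dim(N_B)_0-\ _{0}\!\hom_B(I_B/I_B^2,I_B/I_B^2)=\sum_j\eta(n_{2,j})-\sum_i\eta(n_{1,i})$; adding $\eta(s)$ shows that the first three summands of \eqref{dimHilb} equal $\epsilon$, so that $\dim_{(X)}\Hi^{p(x)}(\PP^n)=\epsilon+\dim(K_B)_{t-2s}+\delta$ with $t=n+1$. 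The whole problem is thereby reduced to controlling the two correction terms $\dim(K_B)_{t-2s}$ and $\delta$.

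For (i) I verify the B)-part. The key new input is the sharpened vanishing $\ _{-s}\!\Ext^1_B(I_B/I_B^2,N_B)=0$ under $s>n_{2,j_0}+\max_{j\ne j_0}n_{2,j}-\min n_{1,i}$: carrying out the degree count in \eqref{NBcodim2} as in the proof of \cite{K}; Corollary 41, one sees that the diagonal contribution $2n_{2,j_0}$ never occurs, only the off-diagonal column-degree sums $n_{2,j}+n_{2,j'}$ with $j\ne j'$ do, the largest being $n_{2,j_0}+\max_{j\ne j_0}n_{2,j}$; this replaces the cruder cutoff $2\max n_{2,j}-\min n_{1,i}$. Next, \eqref{Kcodim2} shows $(K_B)_v=0$ for $v<n+1-\max n_{2,j}$, and the hypothesis forces $2s>\max n_{2,j}$, so $(K_B)_{t-2s}=0$ and \eqref{dimHilb} collapses to $\epsilon+\delta$. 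With the vanishing in hand, the B)-part of \cite{K}; Theorem 25 (exactly as in Corollary 41 and its proof) yields $\delta=0$, the unobstructedness of $X$, and the $p_Y$-genericity; the codimension-$4$ arithmetically Gorenstein property is built into the construction \eqref{Ns}. Hence $\dim_{(X)}\Hi^{p(x)}(\PP^n)=\epsilon$.

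For (ii) I use the A)-part in the form noted before the statement: the proof of \cite{K}; Theorem 25 needs only $\ _{t}\!\Ext^2_B(S^2(I_{A/B}(s)),K_B)=0$ in place of $\ _{0}\!\Ext^2_B(N_B,N_B)=0$. Writing $I_{A/B}(s)=\coker(K_B(t-s)\to N_B)$ from \eqref{Ns}, using the symmetric-square sequence \eqref{I2codim2} (here $char(k)=0$ lets me split $S^2$ off the tensor square) and Gorenstein duality, this vanishing follows from the hypothesis $\ _{s}\!\Ext^1_B(N_B,A)=0$. The bound $s>\max n_{2,j}/2$ again yields $(K_B)_{t-2s}=0$, so $\dim_{(X)}\Hi^{p(x)}(\PP^n)=\epsilon+\delta$, and the A)-part identifies the codimension in $\Hi^{p(x)}(\PP^n)$ of the stratum of schemes of the form \eqref{introMs} with $\ _{0}\!\ext^1_B(I_B/I_B^2,I_{A/B})$. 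For the final identities I apply $\Hom_B(I_B/I_B^2,-)$ to \eqref{Ns}: a duality/degree estimate gives $\ _{t-2s}\!\Ext^i_B(I_B/I_B^2,K_B)=0$ for $i=1,2$ once $s>\max n_{2,j}$, whence the connecting maps force $\ _{0}\!\ext^1_B(I_B/I_B^2,I_{A/B})=\ _{-s}\!\ext^1_B(I_B/I_B^2,N_B)$; the stronger bound $s>\max n_{2,j}+\max n_{1,i}-\min n_{1,i}$ makes $(I_{A/B})_v=0$ for $v\le\max n_{1,i}$ (as $I_{A/B}$ is generated in degrees $\ge s+\min n_{1,i}-\max n_{2,j}$), so that $\ _{0}\!\hom_B(I_B/I_B^2,I_{A/B})=0$ and, via \eqref{delta}, the common value equals $\delta$.

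The main obstacle throughout is the precise bookkeeping of the $\Ext$-groups and their vanishing ranges. Relating the three quantities $\delta$, $\ _{-s}\!\ext^1_B(I_B/I_B^2,N_B)$ and $\ _{0}\!\ext^1_B(I_B/I_B^2,I_{A/B})$, and converting each numerical hypothesis into the vanishing of exactly the right boundary term, rests on repeated use of the long exact sequence coming from \eqref{Ns} together with the resolutions \eqref{rescodim2}, \eqref{Kcodim2}, \eqref{K*codim2}, \eqref{NBcodim2} and Gorenstein duality to trade $\Ext^i_B(-,K_B)$ for local cohomology. The one genuinely delicate estimate is the sharp constant $n_{2,j_0}+\max_{j\ne j_0}n_{2,j}$ in (i): one must see that two equal top column-degrees cannot both feed the obstruction, so that the relevant cutoff is governed by the two largest column-degrees rather than by twice the largest.
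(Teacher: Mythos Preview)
Your overall strategy matches the paper's, but the argument for (i) has a real gap. You claim the sharper cutoff $s > n_{2,j_0} + \max_{j \ne j_0} n_{2,j} - \min n_{1,i}$ follows from ``the degree count in \eqref{NBcodim2} as in the proof of \cite{K}; Corollary 41.'' But the text immediately preceding the corollary says explicitly that this approach only yields the cruder bound $s > 2\max n_{2,j} - \min n_{1,i}$; the improvement requires a different argument. The paper's route is: use the isomorphism $\Ext^1_B(I_B/I_B^2,N_B) \simeq \Ext^1_B(T_B,K_B(n+1))$ with $T_B = \Hom_B(I_B/I_B^2,I_B/I_B^2)$ (from \cite{K}; Remark 42), then apply \eqref{NB2codim2} together with the codepth-$1$ property of $I_B/I_B^2$ to reduce to the vanishing of $\ _{-s}\!\Ext^2_R(I_B^2(n_{1,i}),R)$ for every $i$, and read this off from \eqref{I2codim2}. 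The last syzygy module in \eqref{I2codim2} is $\wedge^2 F_2 = \oplus_{j<j'} R(-n_{2,j}-n_{2,j'})$, and it is this exterior square that forces $j \ne j'$; nothing in \eqref{NBcodim2} does so. Without the detour through $I_B^2$ you cannot exclude the diagonal term $2n_{2,j_0}$.

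In (ii) your citation of \eqref{I2codim2} for the $S^2$ step is also off: that sequence resolves $I_B^2$, not $S^2(I_{A/B}(s))$. The paper instead uses \cite{K}; Lemma 28 to obtain an exact sequence $0 \to \ _t\!\Ext^2_B(S^2(I_{A/B}(s)),K_B) \to \ _t\!\Ext^2_B(S^2(N_B),K_B) \to \ _s\!\Ext^2_B(N_B,B)$, identifies the middle term with $\ _0\!\Ext^2_B(N_B,I_{A/B}(s))$ via \eqref{globalext} and the maximal CM property of $N_B$, and then uses $\Ext^1_B(N_B,B)=0$ to conclude $\ _t\!\Ext^2_B(S^2(I_{A/B}(s)),K_B) \simeq \ _s\!\Ext^1_B(N_B,A)$. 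Your handling of the final identities is close to the paper's; note only that the paper checks $\ _{-2s}\!\Ext^i_B(I_B/I_B^2,K_B(t))=0$ for $i=0,1$ (the case $i=2$ is automatic by codepth) and kills $\ _{-s}\!\Hom(I_B,N_B)$ directly via \eqref{NBcodim2} rather than through a vanishing range for $I_{A/B}$.
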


Here $I_{A/B}= \ker(B \rightarrow A)$ and ``$X$ is $p_Y$-generic'' if there is
an open subset of $ \Hi^{p(x)}(\PP^{n})$ containing $(X)$ whose members $X'$
are subschemes of some closed $Y'$ with Hilbert polynomial $p_Y$. The stratum
in $\Hi^{p(x)}(\PP^{n})$ of subschemes given by (\ref{introMs}) around $(X)$
is defined by functorially varying both $B$, $M$ and the regular section
around $(B \rightarrow A)$ (see \cite{K}; the definition before Theorem 25 for
details). Indeed it is proved in \cite{K}; Lemma 29 that pairs of closed
subschemes $(X' \subset Y')$ of $\PP^{n}$, $X'=\Proj(A')$ and $Y'=\Proj(B')$,
obtained as in \eqref{introMs} contain {\it an open subset} $U \ni (X \subset
Y)$ in the Hilbert flag scheme $D$, and taking such a $U$ small enough, we may
define the mentioned stratum to be $p(U)$ where $p:D \rightarrow
\Hi^{p(x)}(\PP^{n})$ is the projection morphism induced by $(X' \subset Y')
\rightarrow (X')$. Thus ``$X$ is $p_Y$-generic'' essentially means that the
codimension of the stratum of subschemes given by \eqref{introMs} around $(X)$
is zero.

Note also that ``$(X \subset Y)$ is general'' means that it is the general
member of an irreducible (non-embedded) component of the Hilbert flag scheme
$D$. Since we in the corollary suppose $\depth_{I(Z)} B \geq 4$ and hence
$\depth_{\mathfrak m}A \geq 2$, this is equivalent to saying that $(B
\rightarrow A)$ is the general member of an irreducible (non-embedded)
component of the ``Hilbert flag scheme'' parameterizing pairs of quotients of
$R$ with fixed Hilbert functions. Indeed we can replace the schemes $\Gr(R)$
of \cite{K} by $ \Hi ^{p(x)}(\PP^{n})$ because we work with algebras of
$\depth$ {\it at least} 2 at ${\mathfrak m}$ (\cite{K79}; Remark 3.7 or
\cite{elli}).

\begin{proof} By the text  before the exact sequence
(\ref{dimHilb}), to prove (i) it suffices
  to show that $\ _{-s}\!\Ext^1_B(I_B/I_B^2,N_B)=0$. To see it we observe that
  $$\Ext^1_B(I_B/I_B^2,N_B) \simeq \Ext^1_B(T_B,K_B(n+1))$$ where $T_B:=
  \Hom_B(I_B/I_B^2,I_B/I_B^2)$ by \cite{K}; Remark 42. We consider the exact
  sequence
  \eqref{NB2codim2} and we define $F:= \ker(\oplus I_B/I_B^2 (n_{2,j}) \rightarrow
  N_B)$. Since $N_B$ is a maximal CM $B$-module and $ I_B/I_B^2 $ has codepth
  1 (i.e. $\Ext^i_B(I_B/I_B^2,K_B)=0$ for $i \ge 2$) by \cite{AH} or
  \eqref{I2codim2}, we get $\Ext^2_B(F,K_B)=0$. It follows that
 $$\Ext^1_B(\oplus I_B/I_B^2 (n_{1,i}),K_B(n+1)) \rightarrow \
 \Ext^1_B(T_B,K_B(n+1))$$ is surjective. Since $$\Ext^1_B(I_B/I_B^2 ,K_B(n+1))
 \simeq\ \Ext^3_R(I_B/I_B^2,R) \simeq \ \Ext^2_R(I_B^2,R)\ ,$$ it suffices to
 show $ _{-s}\! \Ext^2_R(I_B^2(n_{1,i}),R) = 0$ for any $i$. Looking to
 \eqref{I2codim2} it is enough to see $\ _{-s}\!\Hom(\wedge^2 (\oplus
 R(-n_{2,j}))(n_{1,i}),R)=0$. Since, however, $n_{2,j} + n_{2,j'}-n_{1,i}-s <
 0$ for any $i,j,j'$, $j \ne j'$ by assumption, we easily get this vanishing
 for any $i$ and hence $\ _{-s}\!\Ext^1_B(I_B/I_B^2,N_B)=0$. Finally note that
 the dimension formula follows from \eqref{dimHilb} and \eqref{NB2codim2}
 since we get $(K_B)_{t-2s}=0$ and $\delta =0$ from the proof of (ii).

 (ii) By (\ref{Kcodim2}) we have $(K_B)_{t-2s}=0$ provided $2s> \max n_{2,j}$. By the
 discussion before Corollary~\ref{introcorNB} we must prove $\
 _{t}\!\Ext_B^2(S^2(I_{A/B}(s)),K_B)=0$. Using the proof of \cite{K}; Lemma 28
 there is an exact sequence $$ 0 \rightarrow \
 _{t}\!\Ext_B^2(S^2(I_{A/B}(s)),K_B) \rightarrow \
 _{t}\!\Ext_B^2(S^2(N_B),K_B) \rightarrow \ _{s}\!\Ext_B^2(N_B,B) $$ induced
 by \eqref{Ns} where we have $ _{t}\!\Ext_B^2(S^2(N_B),K_B) \simeq \
 _{0}\!\Ext_B^2(N_B,N_B) \simeq \ _{0}\!\Ext_B^2(N_B,I_{A/B}(s))$ by
 \eqref{globalext}, \eqref{Ns} and the fact that $N_B$ is a maximal CM
 $B$-module. Indeed $ _{t}\!\Ext_B^2(S^2(N_B),K_B) \simeq
 \Ext_{\sO_U}^2(\widetilde {S^2(N_B)}\arrowvert_U,\widetilde
 {K_B}\arrowvert_U(t)) \simeq \Ext_{\sO_U}^2(\widetilde
 {N_B}\arrowvert_U,\widetilde{N_B}^* \otimes \widetilde {K_B}\arrowvert_U(t))
 \simeq \ _{0}\!\Ext_B^2(N_B,N_B)$ by \eqref{globalext}. Since $\
 \Ext_B^1(N_B,B) = 0$ by \eqref{globalext} and \eqref{I2codim2}, it follows
 that
  $$\ _{t}\!\Ext_B^2(S^2(I_{A/B}(s)),K_B) \simeq \ _{s}\!\Ext_B^1(N_B,A)$$
 which vanishes by assumption.

 It remains to prove the final statement. If we apply $\Hom(-,K_B)$ to the
 exact sequence (\ref{res}) and we use the exact sequence (\ref{Kcodim2}), we
 get $\ _{-2s}\!\Ext_R^{i}(I_B,K_B(t))=0$ and hence $\
 _{-2s}\!\Ext_B^{i}(I_B/I_B^2,K_B(t))=0$ for $i=0, 1$ provided $s > \max
 n_{2,j}$. Similarly we use $\Hom(-,N_B)$ and the exact sequence
 (\ref{NBcodim2}) to show that $\ _{-s}\!\Hom(I_B,N_B)=0$ provided $s > \max
 n_{2,j}+\max n_{1,i}-\min n_{1,i}$. We conclude by applying
 $\Hom_B(I_B/I_B^2,-)$ to \eqref{Ns}.
\end{proof}

 \begin{remark} \label{remcorNB}  If $\depth_{I(Z)} B \geq 4$ and $char(k)
    \neq 2$, we showed in \cite{K}; Remark 42 that
  $$\ _0\!\Ext_B^2(N_B,N_B)\ \simeq \ _0\!\Hom_B(I_B/I_B^2,
  \HH^3_{I(Z)}(I_B/I_B^2))\ \simeq \ _0\!\Hom_B(I_B/I_B^2,
  \HH^4_{I(Z)}(I_B^2)) \ .
  $$
 Similarly one shows $\ \Ext_B^2(N_B,B)\ \simeq \
  \HH^4_{I(Z)}(I_B^2)$. Hence the group $\ _s\!\Ext_B^1(N_B,A)$ of
  Corollary~\ref{introcorNB} is isomorphic to the kernel of the natural map $
  _0\!\Hom_B(I_B/I_B^2, \HH^4_{I(Z)}(I_B^2)) \rightarrow\
  _s\HH^4_{I(Z)}(I_B^2) $ induced by the regular section $\sigma$. This sometimes
  allows us to verify $\ _s\!\Ext_B^1(N_B,A)=0$.
 \end{remark}

 \begin{remark} \label{correct} The first author takes the opportunity to
   point out a missing assumption in \cite{K} as well as in \cite{K03}. In
   these papers there are several theorems involving the {\it codimension of a
     stratum} in which the assumption ``$(B \rightarrow A)$ is general'' or
   ``$(B)$ general'' is missing. The main result \cite{K03}; Theorem 5 (and
   hence \cite{K}; Theorem 15) uses generic smoothness in its proof and refers
   to \cite{KMMNP}; Proposition 9.14 where the ``general'' assumption occurs,
   as it should. In the proof of \cite{K03}; Theorem 5 we need ``$(B
   \rightarrow A)$ general'' to compute the dimension of the stratum. It is
   easily seen from the proof that what we really need is that ``$(B
   \rightarrow A)$ is general'' in the sense that, for a given $(B \rightarrow
   A)$, $\ _{0}\!\hom_R(I_B,I_{A/B}) \ $ obtains its least possible value in
   the irreducible components of $\Gr(H_B,H_A)$ to which $(B \rightarrow A)$
   belongs.
  Thus in \cite{K03}; Theorem 5, Proposition 13, Theorem 16 (and hence
  \cite{K}; Theorem 23), for the codimension statement we should assume
  ``$(B)$ general'' or at least that $\ _{-s}\!\hom_R(I_B,K_{B}) \ $ obtains
  its least possible value in the irreducible component of $\Gr(H_B)$ to which
  $(B)$ belongs. If we apply our results in a setting where these hom-numbers
  vanish (this is what we almost always do), we don't need to assume ``$(B)$
  or $(B \rightarrow A)$ general''.
\end{remark}

So Remark~\ref{correct} gives the reason for including the
assumption ``$(X \subset Y)$ is general'' in
Corollary~\ref{introcorNB} (ii) even though this assumption does
not occur in the codimension statements of the A)-part of Theorems
1 and 25 of \cite{K}.

%%%%%%%%%%%%%%%%%%%%%%%%%%%%%%%%%%%%%%%%%%%%%%%%%%%%%%%%%%%%%%%%%%%%%%

\section{Ideals generated by submaximal minors of square matrices}

 Let $X=\Proj(A)\subset \PP^n$  be a
codimension 4, determinantal scheme defined by the submaximal
minors of a $t \times t$ homogeneous matrix. The goal of this
section is to compute the dimension of $\Hi^{p(x)}(\PP^{n} )$ for
$n\ge 5$ at $(X)$  in terms of the corresponding degree matrix.
The proof requires a proposition (valid for $n\ge 3$) on how $A$
is determined by a locally regular section of $I_B/I_B^2(s)$ where
$B=R/I_B$ is a codimension 2 CM quotient. Let us first fix the
notation we will use throughout this section.

\vskip 4mm  Given   a homogeneous matrix $\cA$, i.e. a matrix
representing a degree 0 morphism $\phi $ of free graded
$R$-modules, we denote by $I(\cA)$ (or $I(\phi )$) the ideal of
$R$ generated by the maximal minors of $\cA$ and by $I_j(\cA)$ (or
$I_j(\phi )$) the ideal generated by the $j \times j$  minors of
$\cA$.

\begin{definition}
 A codimension $c$ subscheme $X\subset \PP^{n}$ is
called a \emph{determinantal} scheme if there exist integers $r$,
$p$ and $q$ such that $c=(p-r+1)(q-r+1)$ and $I(X)=I_r(\cA)$ for
some $p\times q$ homogeneous matrix $\cA$. $X\subset \PP^{n}$ is
called a \emph{standard  determinantal} scheme if $r=min(p,q)$.
The corresponding rings $R/I_r(\cA)$ are called determinantal
(resp. standard determinantal) rings.
 \end{definition}

\vskip 2mm
 In this section, we will mainly focus our attention on determinantal
 schemes defined by the submaximal minors
 of a square homogeneous matrix. Let $X\subset \PP^{n}$ be a codimension
 4,
 determinantal scheme
defined by the vanishing of the submaximal minors of a $t\times t$
homogeneous matrix $\cA=(f_{ji})_{i,j=1,\cdots ,t}$ where
$f_{ji}\in { k}[x_{0},\cdots ,x_{n}]$ are homogeneous polynomials
of degree $a_j-b_{i}$ with $b_1 \le b_2\le \cdots  \le b_t$ and
$a_1 \le a_2\le \cdots  \le a_{t}$. We assume without loss of
generality that $\cA$ is minimal; i.e., $f_{ji}=0$ for all $i,j$
with $b_{i}=a_{j}$. If we let $u_{ji}=a_j-b_i$ for all $j=1, \dots
, t$ and $i=1, \cdots , t$, the matrix $\cU=(u_{ji})_{i,j=1,\cdots
 ,t}$ is called the {\em degree matrix} associated to
$X$.

 We
denote by $W^{t-1}_{t,t}(\underline{b};\underline{a})\subset
\Hi^{p(x)}(\PP^{n})$
 the locus of
determinantal schemes $X\subset \PP^{n}$ of codimension $4$
defined by the submaximal minors of a homogeneous square matrix
$\cA=(f_{ji})_{i,j=1,\cdots ,t}$  as above. Notice that
$W^{t-1}_{t,t}(\underline{b};\underline{a})\ne \emptyset $ if and
only if $u_{i-1,i}=a_{i-1}-b_i> 0$ for $i=2,...,t$.

\vskip 2mm Let $\cN $ be the matrix obtained by deleting the last
row, let $I_B=I_{t-1}(\cN)$ be the ideal defined by the maximal
minors of $\cN$ and let $I_A=I_{t-1}(\cA )$ be the ideal generated
by the submaximal minors of $\cA $. Set $A=R/I_A=R/I(X)$ and
$B=R/I_B$.

\begin{remark}\label{hypothesis}
If the entries of $\cA $ and $\cN $ are sufficiently general
polynomials of degree $a_i-b_j$, $1\le i , j \le t$, and
$a_{i-1}-b_{i}>0$ for $2\le i \le t$, then $B$ is a graded
Cohen-Macaulay quotient of codimension 2 and $A$ is a graded
Gorenstein quotient of codimension 4.
\end{remark}

The goal of this section is to compute, in terms of $a_j$ and
$b_i$,  the dimension of the determinantal locus
$W^{t-1}_{t,t}(\underline{b};\underline{a})\subset \Hi^{p(x)}
(\PP^{n} )$, where $p(x)\in \QQ [x]$ is the Hilbert polynomial of
$X$. Note that the Hilbert polynomial of $X$ can be computed
explicitly using the minimal free $R$-resolution of $R/I(X)$ given
by Gulliksen and Neg\aa rd in \cite{GN}, see (\ref{gn}). We will
also analyse whether the closure of
$W^{t-1}_{t,t}(\underline{b};\underline{a})$ in $\Hi^{p(x)}
(\PP^{n} )$ is a generically smooth, irreducible component of $\Hi
^{p(x)} (\PP^{n}).$ To this end, we consider
$$F:=\oplus _{i=1}^tR(b_i)\stackrel {\phi}{ \longrightarrow}
G:=\oplus _{j=1}^tR(a_j)$$ the morphism induced by the above
matrix $\cA$ and
$$F\stackrel {\phi_t}{ \longrightarrow} G_t:=\oplus
_{j=1}^{t-1}R(a_j)$$ the morphism induced by the matrix $\cN$
obtained by deleting the last row of $\cA$. The determinant of
$\cA $ is a homogeneous polynomial of degree
$$s: =\degr(\deter(\cA))=\sum_{j=1}^ta_{j}-\sum_{i=1}^tb_i,$$
and the degrees of the maximal minors of $\cN $ are $s+b_{i}-a_t$,
i.e. $I_B$ has the following minimal free $R$-resolution
\begin{equation}\label{complex1}
0\longrightarrow G_t^*(a_t-s)\stackrel {^t \cN}{ \longrightarrow}
F^*(a_t-s)\stackrel {\beta}{ \longrightarrow} I_B\longrightarrow
0.
\end{equation}

\begin{proposition}\label{sectionnormal} Suppose $char(k)=0$.

(i) Let $A=R/I_{t-1}(\cA )$ be a determinantal ring of codimension
4 where $\cA$ is a $t\times t$ homogeneous matrix and let
$B=R/I_{t-1}(\cN)$ be the standard determinantal ring associated
to $\cN$ where $\cN$ is the matrix obtained by deleting the last
row of $\cA$.
 Moreover, let $Z\subset \Proj(B)$ be a closed subset such that
$\Proj(B)-Z\hookrightarrow \PP^n$ is a l.c.i. and suppose
$\depth_{I(Z)}B\ge 2$.
 Then, there is a regular section $\sigma $ of
$(\widetilde{I_B}/\widetilde{I_B^2}(s))_{|\Proj(B)-Z}$ where
$s=\deg(\det (\cA))$ whose zero locus precisely defines $A$ as a
quotient of B (i.e. $\sigma $ extends to a map $\sigma: B
\longrightarrow I_B/I_B^2(s)$ such that $A=B/(\im \sigma^*)$).

(ii) Conversely, let $B=R/_{t-1}(\cN)$ be a standard determinantal
ring of codimension 2, let $Z\subset \Proj(B)$ be a closed subset
such that $\Proj(B)-Z\hookrightarrow \PP^n$ is a l.c.i. and
 $\depth_{I(Z)}B\ge 2$ and let
$A'$ be defined by a regular section $\sigma $ of
$(\widetilde{I_B}/\widetilde{I_B^2}(s))_{|\Proj(B)-Z}$, i.e. given
by \begin{equation}\label{j07} 0\longrightarrow
K_B(n+1-2s)\longrightarrow N_B(-s) \stackrel {\sigma ^*}{
\longrightarrow}B \longrightarrow A' \longrightarrow
0.\end{equation} Then, there is a $t\times t$ homogeneous matrix
$\cA '$ obtained by adding a row to $\cN$ such that
$I_{A'}=I_{t-1}(\cA ')$.
\end{proposition}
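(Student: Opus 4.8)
The plan is to prove (i) and (ii) as the two directions of one correspondence: the last row of $\cA$ corresponds to a regular section of $\widetilde{I_B}/\widetilde{I_B^2}(s)$ on $U:=\Proj(B)-Z$, which is exactly the datum feeding the rank two construction recalled in Section~3. First I note that $I_B\subseteq I_A$, since the maximal minors of $\cN$ are precisely the submaximal minors of $\cA$ deleting the last row; hence $X=\Proj(A)\subseteq\Proj(B)=Y$ with $X$ of codimension $2$ in $Y$. On $U$ the scheme is an l.c.i.\ and $\depth_{I(Z)}B\ge 2$, so $\widetilde{I_B}/\widetilde{I_B^2}|_U$ is locally free of rank $2$ and reflexive; therefore $\widetilde{N_B}^{*}|_U\simeq\widetilde{I_B}/\widetilde{I_B^2}|_U$, a section of $\widetilde{N_B}^{*}(s)|_U$ in the sense of \cite{KP} is the same as a section of $\widetilde{I_B}/\widetilde{I_B^2}(s)|_U$, and by \cite{KP}; Proposition~13 such a section produces the four term sequence \eqref{Ns}/\eqref{j07} with $A'$ arithmetically Gorenstein of codimension $4$.

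The heart is the identity $\im(\sigma^{*})=I_A/I_B$. Laplace expansion along the last row gives $\det(\cA)=\sum_i f_{ti}\,g_i$, where the $g_i$ are the signed maximal minors of $\cN$ generating $I_B$ as in \eqref{complex1}; thus $\bar\sigma:=[\det(\cA)]=\sum_i f_{ti}\bar g_i$ lies in $(I_B/I_B^2)_s$ and I take it as $\sigma$. Identifying $N_B=\ker(\cN\colon B^{t}\to B^{t-1})$ from the dual of \eqref{complex1}, an element $w=(w_i)\in N_B$ is the homomorphism $g_i\mapsto w_i$, so $\sigma^{*}(w)=\sum_i f_{ti}w_i$. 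To locate the zero locus I argue fibrewise on $U$: tensoring \eqref{complex1} with $B$ gives $G_t^{*}\otimes B\xrightarrow{{}^{t}\cN}F^{*}\otimes B\to I_B/I_B^2\to 0$, hence $(I_B/I_B^2)\otimes k(y)=k(y)^{t}/\im({}^{t}\cN(y))$ for $y\in U$, and $\sigma(y)=0$ iff the last row $(f_{ti}(y))_i$ lies in the row space of $\cN(y)$. Since $\cN(y)$ has rank $t-2$ on $U$, this holds iff $\rank\cA(y)\le t-2$, i.e.\ iff every $(t-1)$-minor of $\cA$ vanishes at $y$, i.e.\ iff $y\in X$. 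In particular $\sigma$ is a regular section, as $X$ has the expected codimension $2$ in $Y$.

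To upgrade this to the ideal equality I would work locally on $U$, where $\widetilde{N_B}|_U$ is free of rank $2$: putting $\cN$ in a local normal form one checks directly that the two homomorphisms spanning $N_B$ are carried by $\sigma^{*}$ onto a local generating set of $I_{t-1}(\cA)B$, so $\widetilde{\im(\sigma^{*})}|_U=\widetilde{I_A/I_B}|_U$; the hypothesis $\depth_{I(Z)}B\ge 2$ then identifies both modules with their sections over $U$ and yields $\im(\sigma^{*})=I_A/I_B$ globally. (As a consistency check, the mapping cone applied to \eqref{j07}, \eqref{Kcodim2}, \eqref{NBcodim2}, exactly as in the Example of Section~3, reproduces the Gulliksen--Neg\aa rd resolution \cite{GN} of $R/I_{t-1}(\cA)$, so the Hilbert polynomials agree.) Part (i) is now immediate with $\sigma=[\det(\cA)]$, since $A=B/\im(\sigma^{*})$. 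For (ii) I reverse the construction: a section is an element $\bar\sigma=\sum_i h_i\bar g_i\in(I_B/I_B^2)_s$ with $\deg h_i=a_t-b_i$; appending the row $(h_1,\dots,h_t)$ to $\cN$ gives a $t\times t$ matrix $\cA'$ with $\det(\cA')=\sum_i h_i g_i$ representing $\bar\sigma$, and the same identity gives $I_{t-1}(\cA')=I_{A'}$.

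The Gorenstein sequence \eqref{j07} is handed to us by \cite{KP}, so the real content is showing that the cokernel of $\sigma^{*}$ is the concrete determinantal ring, not merely some Gorenstein quotient with the right invariants. I expect the main obstacle to be exactly the promotion of the fibrewise identification of the zero locus to an equality of ideals: one must control the signs in the Laplace and adjugate expansions and verify in the local trivialization over $U$ that the rank two module $N_B$ is generated by homomorphisms whose images under $\sigma^{*}$ are precisely the submaximal minors, so that none is lost and none is spuriously created. The Cohen--Macaulayness forced by \cite{KP} and the depth hypothesis, which lets a statement over $U$ descend to the saturated ideals, are what I expect to close this last step.
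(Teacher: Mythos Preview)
Your approach diverges from the paper's at the crucial step, the identity $\im(\sigma^*)=I_{A/B}$, and the divergence matters.

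The paper proves this identity \emph{globally} by invoking the Gulliksen--Neg\aa rd presentation of $I_A(s)$,
\[
\ker\bigl[\Hom(F,F)\oplus\Hom(G,G)\xrightarrow{\,j\,}R\bigr]\longrightarrow \Hom(F,G)\longrightarrow I_A(s)\longrightarrow 0,
\]
and comparing it through two commutative diagrams with the presentation \eqref{NBcodim2} of $N_B$. This yields an exact sequence $\Hom(R(a_t),G_t)\xrightarrow{\gamma}N_B\to I_{A/B}(s)\to 0$ in which $\coker(\gamma)$ is simultaneously identified with $I_{A/B}(s)$ (from the GN side) and with $\im(\sigma^*)(s)$ (from the section side). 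No localization is needed.

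Your route is to verify $\widetilde{\im(\sigma^*)}|_U=\widetilde{I_{A/B}}|_U$ by putting $\cN$ into local normal form on $U$ (this computation is fine, and your fibrewise description of the zero locus is correct), and then to promote it to an equality of graded ideals using $\depth_{I(Z)}B\ge 2$. That last step is where the argument breaks. The depth hypothesis gives $B=H^0_*(U,\tilde B)$, but it does \emph{not} give $I_{A/B}=H^0_*(U,\widetilde{I_{A/B}})$ or the analogous statement for $\im(\sigma^*)$: from $0\to I_{A/B}\to B\to A\to 0$ one finds $H^1_{I(Z)}(I_{A/B})\simeq H^0_{I(Z)}(A)$, and nothing in the hypotheses forces $H^0_{I(Z)}(A)=0$ (equivalently $X\not\subseteq Z$). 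So two submodules of $B$ agreeing on $U$ need not agree globally under $\depth_{I(Z)}B\ge 2$ alone. The paper's GN argument is precisely what replaces this missing depth control.

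For part (ii) your reversal (lift $\bar\sigma$ to a row and append it to $\cN$) is the same as the paper's; but note that its correctness rests on the global identity from (i), so the gap above propagates.
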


\begin{proof} First of all, to define $\sigma $, we consider the commutative diagram

\[
\begin{array}{ccccccccc}
& 0 & & & & & \\ & \downarrow & & & & & \\ 0 & \longrightarrow
 G_t^*(a_t-s) & \stackrel {^t \cN}{ \longrightarrow}
& F^*(a_t-s) & \stackrel {\beta}{ \longrightarrow} & I_B &
\longrightarrow & 0
\\ &
\ \ \downarrow \alpha& & \parallel & & \downarrow & \\
&G^*(a_t-s) & \stackrel {\phi^*(a_t-s)}{ \longrightarrow} &
F^*(a_t-s) & \longrightarrow & (\coker\phi^*)(a_t-s) &
\longrightarrow & 0
\\ &
\downarrow & & & & & \\ & R(-s) & & & & & \\ & \downarrow & & & &
& \\ &0 & & & & &
\end{array}
\]
where $\alpha:G_t^*(a_t-s)\hookrightarrow G^*(a_t-s)$ is the
natural inclusion defined by
{\footnotesize $\alpha \left ( \begin{array}{c} f_{1}  \\ \vdots \\
f_{t-1}\end{array} \right )=\left ( \begin{array}{c} f_{1}  \\ \vdots \\
f_{t-1}\\ 0 \end{array} \right )$} and $\beta $ is given by
multiplication with the maximal minors of the matrix $\cN$. Thus
we get  the exact sequence
\begin{equation}\label{1*} R(-s)\stackrel {\cdot \deter(\phi)}{
\longrightarrow} I_B \longrightarrow
(\coker\phi^*)(a_t-s)\longrightarrow 0\end{equation}  and hence
\begin{equation}\label{2**} (\coker\phi^*)(a_t) \simeq I_B(s)/\deter(\phi).\end{equation}

\noindent If we tensor $R(-s)\stackrel {\cdot \deter(\phi)}{
\longrightarrow} I_B$ with $B(s)$, we get a section $\sigma $ of
$I_B/I_B^2(s)$. Before proving that the zero locus of $\sigma $
defines precisely $A$ as a quotient of $B$ via $\im(\sigma
^*)=I_{A/B}$, we claim that any regular section $\sigma '$ of
$I_B/I_B^2(s)$ defining $A'$ via $A'=B/\im(\sigma '{} ^{*})$ gives
rise to a homogeneous matrix $\cA '$ and a corresponding map $\phi
'$ such that (\ref{1*}) and (\ref{2**}) hold with $\phi '$ instead
of $\phi$. Indeed, given a section $\sigma '$ of $I_B/I_B^2(s)$,
there exists a map $\sigma ''$ fitting into a commutative diagram
$$
\xymatrix{ &  F^*(a_t)\otimes B \ar@{>>}[d]
\\B  \ar[r]_{\sigma '} \ar[ru]^-{\sigma ''} &     I_B/I_B^2(s)  }$$
and we denote by $\sigma _R\in \Hom_R(F,R(a_t))$ the map which
corresponds to $\sigma ''(1)$. Since $\Hom_R(F,R(a_t))=\Hom
(\oplus _{i=1}^tR(b_i), R(a_t))$, the morphism $\sigma _R$
determines a $1\times t$ row $\underline{g}=(g_1,\cdots ,g_t)$
where $g_i$ is a homogeneous form of degree $a_t-b_i$, $1\le i \le
t$ and we define $\cA'=\left ( \begin{array}{c} \cN  \\
\underline{g} \end{array} \right )$. Since the vertical map in the
above diagram is induced by $\beta $ described above, we may
assume that $\det (\phi ')=\sigma '(1)$ modulo $I_B^2(s)$ and we
get the claim.

It remains to show that $\im (\sigma ^*)=I_{A/B}$ where
$I_A=I_{t-1}(\cA)$ and that $\sigma $ is a regular section. Note
that this will also show that $\im(\sigma '^*)=I_{A'/B}$ where
$I_{A'}=I_{t-1}(\cA ')$, i.e. we get the converse. Moreover
looking at the exact sequence (\ref{j07}) and recalling that
$$N_B\simeq K_B(n+1)\otimes I_B/I_B^2,$$ we see that $\im (\sigma
^*)=\coker (\sigma(2s)\otimes id)$ where
$id:K_B(n+1)\longrightarrow K_B(n+1)$ is the identity map and
$\sigma $ is induced by $\det (\phi)$. Since we get
$$F(s-a_t)\longrightarrow G_t(s-a_t) \longrightarrow
K_B(n+1)\longrightarrow0$$ by dualizing the exact sequence
(\ref{complex1}), we see that the cokernel above is the same as
the twisted cokernel of the composition $$ \gamma: G_t(-a_t)
\longrightarrow K_B(n+1-s)\stackrel {\sigma(s)\otimes id}{
\longrightarrow} N_B.$$ Hence, we must prove that $\coker
(\gamma)=I_{A/B}(s)$ where $I_A=I_{t-1}(\cA)$.

 By \cite{GN}; Th\'{e}or\`{e}me
2 (see also \cite{Ile}; Theorem 2), we have an exact sequence:

\begin{equation} \label{gn} \ker[\Hom(F,F)\oplus \Hom(G,G) \stackrel {j}{
\longrightarrow}R] \longrightarrow \Hom(F,G) \longrightarrow I_A
(s) \longrightarrow 0\end{equation}

\noindent where $j(\rho_0,\rho_1)=tr(\rho _0)-tr(\rho_1)$ and $tr$
is the trace map. The map $\Hom(F,G)\longrightarrow I_A(s)$ is
given by $\gamma\longrightarrow tr(\gamma\psi)$ where $\psi $ is
the matrix of cofactors, i.e. this map is given by the submaximal
minors of $\cA$ while the map $\Hom(F,F)\oplus \Hom(G,G)\stackrel
{\eta}{ \longrightarrow} \Hom(F,G)$ is given as a difference of
the obvious compositions with $\phi $, i.e., $\eta (\rho _0,\rho
_1)=\rho _1\phi-\phi\rho _0$. Since we have
\[
\begin{array}{ccccccccc}
 \Hom(F,F)\oplus \Hom(G,G) & \stackrel {\eta}{
\longrightarrow}
\Hom(F,G)\longrightarrow & I_A(s) \\
(id,0)  & \mapsto & t\cdot \deter(\phi ) \end{array}\] and since
there is a commutative diagram
\[
\begin{array}{ccccccccc}0 & & & & & \\ \downarrow & & & & & \\
\ker(j) &  \longrightarrow & \Hom(F,G) &  \longrightarrow & I_A(s)
& \longrightarrow & 0
\\
\downarrow & & \parallel & & \downarrow & \\
\Hom(F,F)\oplus \Hom(G,G)  & \stackrel {\eta}{ \longrightarrow} &
\Hom(F,G) &  \longrightarrow & \coker\eta & \longrightarrow & 0
\\
\downarrow & & & & & \\
R & & & & &
\end{array}
\] we get an exact sequence
$$R\stackrel {\cdot t\deter(\phi)}{ \longrightarrow} I_A(s)
 \longrightarrow  \coker\eta  \longrightarrow  0.$$ Hence,
 $\coker(\eta)\simeq I_A(s)/\deter(\phi)$ ($char(k)=0$) and the
 following sequence is exact:
$$\Hom(F,F)\oplus \Hom(G,G)   \stackrel {\eta}{ \longrightarrow}
\Hom(F,G)   \longrightarrow  I_A(s)/\deter(\phi) \longrightarrow
0.$$

Now we look at the commutative diagram

$$
\xymatrix{ \Hom(R(-a_t),G^*) \ar[d]_{\simeq} \ar[r] &
\Hom(R(-a_t),F^*) \ar[d]_{\simeq} \ar[r] & I_B(s)/
\deter(\phi)\ar[dd] \ar[r] & 0
\\ \Hom(G,R(a_t))\ar[d]^{(0, \cdot )} \ar[r] & \Hom(F,R(a_t)) \ar[d] & &
\\
\Hom(F,F)\oplus \Hom(G,G) \ar[d]^{(id, \alpha_1^*)} \ar[r]^-{\eta}
& \Hom(F,G) \ar[d]^{\alpha_2^*} \ar[r] & I_A(s)/
\deter(\phi)\ar[d] \ar[r] & 0
\\
\Hom(F,F)\oplus \Hom(G,G_t) \ar[r]^-{\eta_t} & \Hom(F,G_t) \ar[r]
& \coker(\eta_t) \ar[r]  & 0 }$$

\noindent where $\alpha _1^*$ and $\alpha_2^*$ are induced by
$\alpha$ in a natural way and $\eta _t$ is a difference of the
obvious compositions, i.e., $\eta
_t(\rho_0,\rho_1')=\rho_1'\phi-\phi _t\rho_0$. We see, in
particular, that the ideal $I_{A/B}=I_A/I_B$ is given by an exact
sequence

$$\Hom(F,F)\oplus \Hom(G,G_t)   \stackrel {\eta _t}{ \longrightarrow}
\Hom(F,G_t)   \longrightarrow  I_{A/B}(s)  \longrightarrow 0$$
where  the rightmost map is given by the submaximal minors of the
matrix $\cA$ which do not belong to $I_B$.

On the other hand, by (\ref{NBcodim2}), there is an exact sequence
$$\Hom(G_t^*,G_t^*)  \oplus \Hom(F^*,F^*) \longrightarrow
\Hom(G_t^*,F^*)   \longrightarrow N_B \longrightarrow 0$$ or,
equivalently,
$$ \Hom(F,F)\oplus \Hom(G_t,G_t)    \stackrel {\eta'}{ \longrightarrow}
\Hom(F,G_t)    \longrightarrow  N_B \longrightarrow 0$$ where
$N_B=\Hom(I_B/I_B^2,B)$ is the normal module and $\eta '$ is given
by $\eta'(\rho_0,\rho_2)=\rho_2\phi_t-\phi_t\rho_0$. Using again
the exact sequence $0\longrightarrow R(a_t)\longrightarrow
G\stackrel {\alpha^*}{ \longrightarrow} G_t\longrightarrow 0$ we
get a commutative diagram

$$
\xymatrix{ \Hom(F,F)\oplus \Hom(G_t,G_t) \ar[d]^{(id, \alpha_3)}
\ar[r]^-{\eta'} & \Hom(F,G_t) \ar@{=}[d] \ar[r] & N_B \ar[d]
\ar[r] & 0
\\
\Hom(F,F)\oplus \Hom(G,G_t) \ar[d] \ar[r]^-{\eta_t} & \Hom(F,G_t)
\ar[r] & I_{A/B}(s) \ar[r]  & 0 \\ \Hom(R(a_t),G_t) }$$

\noindent where $\alpha_3$ is induced by $\alpha$. Hence we get an
exact sequence
$$ \Hom(R(a_t),G_t) \stackrel {\gamma }{ \longrightarrow} N_B\longrightarrow
I_{A/B}(s)\longrightarrow 0.$$

This proves that $\coker(\gamma )=I_{A/B}(s)$, i.e.
$\im(\sigma^*)=I_{A/B}$ as required.  Finally note that the above
codimension and depth relations imply that $\sigma $ is a {\em
regular} section on $U:=\Proj(B)-Z$ because $(\im
\tilde{\sigma}^*)_{|U}$ must locally on $U$ be generated by two
regular elements (to get that $(B/\im \tilde{\sigma }^*)_{|U}$ is
a codimension 2 Cohen-Macaulay quotient of $\widetilde{B}{|U}$).
This completes the proof of Proposition \ref{sectionnormal}.
\end{proof}

\vskip 2mm The above proposition seems to be known in special
cases. We have for instance noticed that Ellingsrud and Peskine
claim that the Artinian Gorenstein ring associated to an
invertible sheaf $\cO _S(C)$ on a surface $S$ in $\PP^3$, where
$C$ is an arithmetically CM curve, is given by the submaximal
minors of a square matrix which extends the Hilbert-Burch matrix
associated to $C$ in $\PP^3$ (see the text of \cite{EP} before
Proposition 6). Since, we get (\ref{introMs}) with $M=N_B$ by
applying $H^0_*(-)$ to the exact sequence
$$ 0 \rightarrow \cN _{C/S}(-s) \rightarrow \cN _C(-s) \rightarrow
\cN_S|_C(-s)\simeq \cO_C\rightarrow 0$$ of normal sheaves, it is
clear that their Gorenstein ring (see their construction 2) is
essentially the same as ours in the Artinian case.  However, we
have given a complete proof of the above proposition suited to our
applications.

 \vskip 2mm As a nice application of   Proposition \ref{sectionnormal}
 we have

 \vskip 2mm \begin{proposition} \label{glicci}
 Let $X\subset \PP^n$, $n\ge 4$,  be a codimension 4 scheme defined by the submaximal
 minors of a $t\times t$ homogeneous square matrix $\cA$. Then
 $X$ is in the Gorenstein liaison class of a complete intersection, i.e. $X$ is glicci.
\end{proposition}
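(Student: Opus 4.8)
The plan is to feed Proposition~\ref{sectionnormal} into the machinery of Gorenstein biliaison, and then to run the whole ``section'' construction along a chain that degenerates the base scheme to a complete intersection. First I would invoke Proposition~\ref{sectionnormal}(i): writing $A=R/I_{t-1}(\cA)$, $B=R/I_{t-1}(\cN)$ and $s=\deg(\det\cA)$, there is a closed $Z\subset Y:=\Proj(B)$ with $U:=\Proj(B)-Z\hookrightarrow\PP^n$ an l.c.i., $\depth_{I(Z)}B\ge 2$, and a regular section $\sigma$ of the twisted conormal sheaf $(\widetilde{I_B}/\widetilde{I_B^2})(s)$ on $U$ whose zero locus defines $A$ as a quotient of $B$. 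Equivalently $X\subset Y$ sits in the exact sequence \eqref{j07}, so $X$ is an arithmetically Gorenstein scheme of codimension $4$ cut out by a regular section of the rank two sheaf $\widetilde{N_B}^*(s)$ on $U$, with the orientation $\wedge^2\widetilde{N_B}\cong\widetilde{K_B}(n+1)$ on $U$ making $X$ subcanonical. Here $Y$ is codimension $2$ arithmetically Cohen--Macaulay, hence glicci: by Gaeta's theorem every codimension $2$ ACM subscheme of $\PP^n$ lies in the complete intersection liaison class, and in particular in the Gorenstein liaison class of a complete intersection (cf. \cite{KMMNP}).

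The heart of the argument is to transport this property from the base $Y$ to the section locus $X$. Since Gorenstein linkage preserves codimension, $X$ (codimension $4$) and $Y$ (codimension $2$) cannot lie in the same class directly; instead I would degenerate the base. By Gaeta's theorem $Y$ is joined to a codimension $2$ complete intersection $Y_m$ by a finite chain of elementary (ascending) biliaisons $Y=Y_0\leadsto Y_1\leadsto\cdots\leadsto Y_m$. The key claim is that the rank two construction \eqref{introMs}--\eqref{Ns} is functorial enough to be carried along this chain: each elementary biliaison of the base lifts to an elementary Gorenstein biliaison of the associated codimension $4$ section loci $X_i\leadsto X_{i+1}$, where $X_i\subset Y_i$ is cut out by a regular section of the twist of the conormal sheaf of $Y_i$. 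This is the type of statement established for such constructions in \cite{K} and \cite{KP}, combined with the Gorenstein biliaison machinery of \cite{KMMNP}; the inputs needed ($N_B$ maximal Cohen--Macaulay, the canonical determinant condition, and the depth/l.c.i.\ hypotheses on $U$) are exactly those supplied by Proposition~\ref{sectionnormal} and the results of Section~3.

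When the base reaches the complete intersection $Y_m$, its conormal sheaf is the split bundle $\cO_{Y_m}(-d_1)\oplus\cO_{Y_m}(-d_2)$, so a regular section of its twist cuts out on $Y_m$ the complete intersection of two further forms; hence $X_m$ is a codimension $4$ complete intersection. Tracing back along the chain, $X=X_0$ is Gorenstein bilinked to the complete intersection $X_m$, i.e. $X$ is glicci. The degenerate range $n=4$, where $X$ is a zero--dimensional arithmetically Gorenstein scheme, is covered by the same argument, and $char(k)=0$ is inherited from Proposition~\ref{sectionnormal}.

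The hard part will be the lifting step: one must check that an elementary biliaison of the codimension $2$ base genuinely induces a Gorenstein biliaison of the codimension $4$ loci, i.e.\ that the two constructions are compatible and that the intermediate residual schemes are again arithmetically Gorenstein and l.c.i.\ on a suitable open set, so that the biliaison theory applies; one must also arrange the biliaison chain for $Y$ so as to keep $U$ an l.c.i.\ with $\depth_{I(Z)}B\ge 2$ throughout. If this lifting proves delicate, an alternative is to appeal directly to a structural theorem identifying the Gorenstein biliaison class of the zero locus of a section of a rank two ACM sheaf with canonical determinant on a glicci ACM scheme, which would yield the conclusion in one step.
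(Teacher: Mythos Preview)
Your proposal takes a much harder route than necessary, and the central ``lifting'' step is a genuine gap rather than a routine verification. The paper's proof is a two--line observation: by the Gulliksen--Neg\aa rd resolution (\cite{GN}; Th\'eor\`eme~2), or equivalently by the exact sequence \eqref{j07} coming from Proposition~\ref{sectionnormal}, the ring $A=R/I_{t-1}(\cA)$ is arithmetically Gorenstein; and by Casanellas--Drozd--Hartshorne (\cite{cdh}; Theorem~7.1) every arithmetically Gorenstein subscheme of $\PP^n$ is glicci. That is the whole argument. You correctly identified that $X$ is arithmetically Gorenstein (you even wrote it down), but you did not notice that this already finishes the proof via \cite{cdh}.

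Your alternative strategy --- run a Gaeta biliaison chain on the codimension~$2$ base $Y$ and lift each elementary biliaison of $Y$ to a Gorenstein biliaison of the associated codimension~$4$ section loci $X_i$ --- is not supported by the references you cite. The papers \cite{K} and \cite{KP} study deformations and Hilbert--flag smoothness for the section construction; they do not provide a mechanism that turns a biliaison $Y_i\leadsto Y_{i+1}$ into a biliaison $X_i\leadsto X_{i+1}$. In fact there is no a priori reason why the section $\sigma$ on $Y_0$ should propagate along a liaison chain of bases, nor why the residual schemes in such a lift would be arithmetically Gorenstein of the right kind. You flagged this as ``the hard part'', and it is: as written it is an unproved claim, not a reduction to known results. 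The fallback you mention at the end --- a structural theorem classifying the Gorenstein biliaison class of zero loci of sections of rank~$2$ ACM sheaves with canonical determinant on glicci bases --- would, if it existed in that generality, essentially subsume \cite{cdh}; Theorem~7.1 for this case, so invoking it is not more elementary than the paper's route.
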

\begin{proof} By \cite{GN}; Th\'{e}or\`{e}me 2 (see also
Proposition \ref{sectionnormal}), $X$ is arithmetically Gorenstein
and hence glicci (\cite{cdh}; Theorem 7.1).
\end{proof}

\begin{remark} The above proposition has been recently generalized
by Gorla. In \cite{Go}; Theorem 3.1, she has proved that any
codimension $(t-r+1)^2$ ACM scheme $X\subset \PP^n$ defined by the
$r\times r$ minors of a $t\times t$ homogeneous square matrix $\cA
$ is glicci.

For an introduction to glicciness, see \cite{KMMNP}.
\end{remark}

We are now ready to compute $\dim
W^{t-1}_{t,t}(\underline{b};\underline{a})$ and $\dim_{(X)}\Hi
^{p(x)}\PP^n$, $n\ge 5$,  in terms of $a_1, \cdots ,a_t$ and $b_1,
\cdots , b_{t}$. Note that if $t=2$ then a general  $X$ is a
complete intersection in which case these dimensions are well
known.

\begin{theorem}\label{mainthm1} ($char(k)=0$) Fix integers $a_1\le a_2 \le \cdots \le a_t$ and
$b_1\le b_2 \le  \cdots \le b_{t}$. Assume $t>2$, $ a_i\ge
b_{i+3}$ for $1\le i \le t-3$ (and $a_1\ge b_t$ if $t= 3$),
$a_t>a_{t-1}+a_{t-2}-b_1$ and $n\ge 5$. Then
$W^{t-1}_{t,t}(\underline{b};\underline{a})$ is irreducible.
Moreover, if  $(X)$ is general in
$W^{t-1}_{t,t}(\underline{b};\underline{a})$, then $X$ is
unobstructed,
 and
$$\dim  W^{t-1}_{t,t}(\underline{b};\underline{a})
= \dim _{(X)}\Hi ^{p(x)}(\PP^n)=$$ {\footnotesize $$\sum _{1\le i
,j\le t }{a_j-b_i+n\choose n}- \sum _{1\le i\le t-1 \atop 1\le j
\le t}{a_j-a_i+n\choose n}-\sum _{1\le i , j \le
t}{b_i-b_j+n\choose n}+ \sum _{1\le i\le t \atop 1\le j \le
t-1}{b_i-a_j+n\choose n}$$
$$-\sum _{1\le j \le t \atop
1\le i \le k \le t}{a_t-s-b_i-b_k+a_j+n\choose n}+\sum _{1\le i, j
\le t \atop 1\le k \le t-1}{a_t-s-b_i-a_k+a_j+n\choose n}-$$$$
\sum _{1\le i < k \le t-1\atop 1\le j \le
t}{a_t-s-a_i-a_k+a_j+n\choose n}+ \sum _{2\le i\le
t}{a_t-s+b_i-2b_1+n\choose n}.$$ }

\end{theorem}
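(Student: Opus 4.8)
The plan is to deduce Theorem~\ref{mainthm1} from Corollary~\ref{introcorNB}(ii) applied to the codimension~2 CM ring $B=R/I_{t-1}(\cN)$, using Proposition~\ref{sectionnormal} to identify our determinantal scheme $X$ with the zero locus of a regular section of $\widetilde{I_B/I_B^2}(s)=\widetilde{N_B}^*(s)$ on $U=\Proj(B)-Z$. First I would record the Hilbert-Burch resolution \eqref{complex1} of $B$, so that the numerical data are $n_{1,i}=s+b_i-a_t$ (the degrees of the $t$ generators of $I_B$, i.e.\ the maximal minors of $\cN$) and $n_{2,j}=s+a_j-a_t$ for $j=1,\dots,t-1$ (the column twists), and then verify the two chains of hypotheses: that $Z$ can be chosen with $\depth_{I(Z)}B\ge 4$ so that $U\hookrightarrow\PP^n$ is a l.c.i.\ (this is where $\dim X\ge 1$, i.e.\ $\dim B\ge 3$, and the genericity of the entries enter, via Remark~\ref{hypothesis}), and that the degree inequality $s>\max n_{2,j}$ needed in part~(ii) holds. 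A direct computation shows $\max_j n_{2,j}=s+a_{t-1}-a_t$, so $s>\max n_{2,j}$ is equivalent to $a_t>a_{t-1}$, which is implied by the standing hypothesis $a_t>a_{t-1}+a_{t-2}-b_1$.

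The crux is then to verify the vanishing $\ _s\!\Ext_B^1(N_B,A)=0$ demanded by Corollary~\ref{introcorNB}(ii), since unobstructedness, the dimension formula $\dim_{(X)}\Hi^{p(x)}(\PP^n)=\epsilon+\delta$, and the codimension statement all rest on it. I expect this to be the main obstacle. My plan is to exploit Remark~\ref{remcorNB}, which identifies $\ _s\!\Ext_B^1(N_B,A)$ with the kernel of the natural map $\ _0\!\Hom_B(I_B/I_B^2,\HH^4_{I(Z)}(I_B^2))\to\ _s\HH^4_{I(Z)}(I_B^2)$ induced by the section $\sigma$. The strategy is to show this $\Hom$-group already vanishes by a degree count: using the resolution \eqref{I2codim2} of $I_B^2$ together with \eqref{globalext} to pass from $\HH^4_{I(Z)}(I_B^2)$ to the local cohomology of the free modules appearing in \eqref{I2codim2}, the relevant graded pieces are governed by the degrees $n_{1,i}+n_{1,k}$, $n_{1,i}+n_{2,j}$, and $n_{2,j}+n_{2,j'}$; here I would argue that the sharp inequality $a_t>a_{t-1}+a_{t-2}-b_1$ forces the extreme twist $n_{2,t-2}+n_{2,t-1}$ (equivalently the term $a_t-s+b_i-2b_1$ with the smallest generator degree) to lie strictly below the threshold where a nonzero $\Hom$ could appear, thereby killing the whole group. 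The examples promised in the final section—showing this inequality cannot be dropped—are precisely the cases where that threshold is crossed and $\ _s\!\Ext_B^1(N_B,A)\neq 0$.

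With the vanishing in hand, irreducibility and the remaining structural claims follow formally. Irreducibility of $W^{t-1}_{t,t}(\underline{b};\underline{a})$ comes from realizing it as the image under the projection $p:D\to\Hi^{p(x)}(\PP^n)$ of an open subset of the Hilbert flag scheme of pairs $(X'\subset Y')$ built as in \eqref{introMs}: the base parameterizing the codimension~2 ACM schemes $Y'=\Proj(B')$ is irreducible (the space of Hilbert-Burch matrices $\cN$ of fixed degree type is an open set in an affine space of homogeneous entries), and the fibre data—the choice of the extra row $\underline g$, equivalently the regular section $\sigma$—vary in an irreducible family as well, by the correspondence established in Proposition~\ref{sectionnormal}(ii). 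Since $X$ is unobstructed and $p_Y$-generic (the codimension of the stratum being $\ _0\!\ext^1_B(I_B/I_B^2,I_{A/B})$, which under our inequality equals $\delta$), the closure of $W^{t-1}_{t,t}(\underline{b};\underline{a})$ is a generically smooth irreducible component of $\Hi^{p(x)}(\PP^n)$ of the asserted dimension.

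Finally I would translate the abstract dimension $\epsilon+\delta$ of Corollary~\ref{introcorNB}(ii) into the explicit binomial sum in the statement. The first four sums are exactly $\epsilon=\eta(s)+\sum_j\eta(n_{2,j})-\sum_i\eta(n_{1,i})$, where $\eta(v)=\dim(I_B/I_B^2)_v$ is computed from \eqref{K*codim2} (equivalently from the resolution \eqref{complex1} of $I_B$ together with \eqref{I2codim2} for $I_B^2$); substituting $n_{1,i}=s+b_i-a_t$ and $n_{2,j}=s+a_j-a_t$ and using $s=\sum a_j-\sum b_i$ produces the four leading $\binom{\cdot+n}{n}$ terms after the twists simplify. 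The last four sums comprise $\delta=\ _0\!\ext^1_B(I_B/I_B^2,I_{A/B})=\ _{-s}\!\ext_B^1(I_B/I_B^2,N_B)$, which under the stated inequalities (Corollary~\ref{introcorNB}(ii), the range $s>\max n_{2,j}+\max n_{1,i}-\min n_{1,i}$) I would compute from the four-term exact sequence \eqref{NB2codim2} by taking graded Euler characteristics, reducing $\delta$ to an alternating sum of dimensions of graded pieces of free modules; the degrees $a_t-s-b_i-b_k+a_j$, $a_t-s-b_i-a_k+a_j$, $a_t-s-a_i-a_k+a_j$, and $a_t-s+b_i-2b_1$ that appear are precisely the twists in the Koszul-type resolution \eqref{I2codim2} of $I_B^2$ paired against $N_B$. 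This last computation is bookkeeping rather than conceptual, so the real work of the proof remains the $\Ext^1$-vanishing.
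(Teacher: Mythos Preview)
Your approach goes through Corollary~\ref{introcorNB}(ii), but the paper uses part~(i), and this is not a cosmetic difference: your route has a genuine gap, while part~(i) is immediate.

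The key observation you missed is that the hypothesis $a_t>a_{t-1}+a_{t-2}-b_1$ is \emph{exactly} the numerical condition of Corollary~\ref{introcorNB}(i). With $n_{2,j}=s+a_j-a_t$ and $n_{1,i}=s+b_i-a_t$, one has $n_{2,j_0}=s+a_{t-1}-a_t$, $\max_{j\neq j_0}n_{2,j}=s+a_{t-2}-a_t$, $\min n_{1,i}=s+b_1-a_t$, and the inequality $s>n_{2,j_0}+\max_{j\neq j_0}n_{2,j}-\min n_{1,i}$ reduces to $a_t>a_{t-1}+a_{t-2}-b_1$. So part~(i) applies directly, giving unobstructedness and $\dim_{(X)}\Hi^{p(x)}(\PP^n)=\epsilon$ with no $\delta$ correction: the proof of part~(i) shows $\ _{-s}\!\Ext^1_B(I_B/I_B^2,N_B)=0$, hence $\delta=0$.

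Your plan to verify $\ _s\!\Ext_B^1(N_B,A)=0$ by showing that the group $\ _0\!\Hom_B(I_B/I_B^2,\HH^4_{I(Z)}(I_B^2))\simeq\ _0\!\Ext_B^2(N_B,N_B)$ already vanishes does not work. This group depends only on $B$, i.e.\ on $\cN$, and is completely insensitive to $a_t$; so no inequality on $a_t$ can force it to vanish. Example~\ref{example3} exhibits a $B$ (with $t=3$, $a_1=1$, $a_2=2$, $b_i=0$) for which $\ _0\!\ext_B^2(N_B,N_B)=3$, and this value persists for all $a_3$, including the range $a_3>3$ covered by Theorem~\ref{mainthm1}. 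Thus the ``sharp inequality forces the extreme twist below threshold'' argument cannot go through as you describe it; at best you would have to analyze the kernel of the map $\ _0\!\Hom_B(I_B/I_B^2,\HH^4_{I(Z)}(I_B^2))\to\ _s\HH^4_{I(Z)}(I_B^2)$ itself, which is considerably more delicate and which the paper avoids entirely.

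Consequently your decomposition of the dimension formula is also wrong. You assert that the first four binomial sums give $\epsilon$ and the last four give $\delta$, but in fact $\delta=0$ and \emph{all eight} sums constitute $\epsilon=\eta(s)+\sum_j\eta(n_{2,j})-\sum_i\eta(n_{1,i})$. The paper computes $\eta(s)$ (five terms, from \eqref{burch1} and \eqref{S2I}) and $\sum_j\eta(n_{2,j})-\sum_i\eta(n_{1,i})$ (ten terms), then uses the hypotheses $a_{i-1}>b_i$ and $a_i\ge b_{i+3}$ to show that several of these binomials vanish and that one collapses to the final sum $\sum_{i\ge 2}\binom{a_t-s+b_i-2b_1+n}{n}$; combining yields the eight displayed sums. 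The terms you attribute to $\delta$ are really the $I_B^2$-contribution to $\eta$, coming from the resolution \eqref{I2codim2}.
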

\begin{proof}
Let  $X\subset \PP^{n}$ be an arithmetically Gorenstein scheme of
codimension $4$ defined by the submaximal minors of a homogeneous
square matrix $\cA=(f_{ji})_{j=1,...,t}^{i=1,...,t}$ where
$f_{ji}\in { k}[x_{0},...,x_{n}]$ is a sufficiently general
homogeneous polynomial of degree $a_j-b_{i}$ and let $Y\subset
\PP^n$ be a codimension 2 subscheme defined by the maximal minors
of the matrix $\cN $ obtained deleting the last row of $\cA $ (see
Remark \ref{hypothesis}). So, the homogeneous ideal $I_B=I(Y)$ of
$Y$ has the following minimal free $R$-resolution

\begin{equation}\label{burch1}
0\longrightarrow F_2=\oplus _{j=1}^{t-1}R(a_t-s-a_j)\stackrel {^t
\cN}{ \longrightarrow} F_1=\oplus _{i=1}^tR(a_t-s-b_i
)\longrightarrow I_B\longrightarrow 0.       \end{equation}

By Proposition \ref{sectionnormal}, $X$ is the zero locus of a
suitable regular section of $\widetilde{I_B}/\widetilde{I_B^2}(s)$
where $s=\degr(\deter(\cA ))$ and
$W^{t-1}_{t,t}(\underline{b};\underline{a})$ is irreducible  by
\cite{K}; Corollary 41. Since the hypothesis
$a_t>a_{t-1}+a_{t-2}-b_1$ is equivalent to
 $$s > s+a_{j_0}-a_t +
   \max_{1\le j \le t-1 \atop j\ne j_0}(s+a_{j}-a_{t})  - \min _{1\le i \le t}(s+b_{i}-a_{t})
   ,$$  where $s+a_{j_0}-a_t = \max _{1\le j\le t-1}
   (s+a_{j}-a_{t})$;
and since $a_i\ge b_{i+3}$ for $1\le i\le t-3$ (and $a_1\ge b_t$
if $t=3$) implies that $B:=R/I_B$ given by (\ref{burch1})
satisfies $\depth_{I(Z)}B\ge 4$ (cf. \cite{KM}; Remark 2.7), we
can apply Corollary \ref{introcorNB} and we get that $X$ is
unobstructed and
$$\dim W^{t-1}_{t,t}(\underline{b};\underline{a})= \dim _{(X)}\Hi
^{p(x)}(\PP^n)=\eta(s)+\sum_{j=1}^{t-1}\eta
(n_{2,j})-\sum_{i=1}^t\eta (n_{1,i})$$ where $\eta(t)=\dim
(I(Y)/I(Y)^2)_t=\dim I(Y)_t-\dim I(Y)^2_t$, $n_{2,j}=s+a_j-a_t$,
$1\le j\le t-1$, and $n_{1,i}=s+b_i-a_t$, $1\le i \le t$. By
(\ref{I2codim2}), $I(Y)^2$ has a minimal free $R$-resolution of
the following type:

\begin{equation}\label{S2I}
0\longrightarrow \wedge ^2F_2=\oplus _{1\le i<j\le
t-1}R(-a_i-a_j+2a_t-2s) \longrightarrow
\end{equation} $$F_1\otimes F_2=\oplus _{1\le i \le t \atop 1\le j \le
t-1}R(-b_i-a_j+2a_t-2s) \longrightarrow $$$$ S^2F_1=\oplus _{1\le
i\le j\le t}R(-b_i-b_j+2a_t-2s)\longrightarrow
I(Y)^2\longrightarrow 0.
$$

Using (\ref{burch1}) and (\ref{S2I}), we obtain

$$
\eta(s)=\sum _{1\le i \le t}{a_t-b_i+n\choose n}- \sum _{1\le i\le
t-1}{a_t-a_i+n\choose n}-\sum _{1\le i \le j \le
t}{2a_t-s-b_i-b_j+n\choose n}+ $$ $$\sum _{1\le i\le t \atop 1\le
j \le t-1}{2a_t-s-b_i-a_j+n\choose n}-\sum _{1\le i < j \le
t-1}{2a_t-s-a_i-a_j+n\choose n}.$$

Using again (\ref{burch1}) and (\ref{S2I}), we get

$$
\sum_{j=1}^{t-1}\eta (n_{2,j})-\sum_{i=1}^t\eta (n_{1,i})=\sum
_{1\le i \le t \atop 1\le j \le t-1}{a_j-b_i+n\choose n}-\sum
_{1\le i\le t-1 \atop 1\le j \le t-1}{a_j-a_i+n\choose n}-$$
$$\sum
_{1\le j \le t-1 \atop 1\le i \le k \le
t}{a_t-s-b_i-b_k+a_j+n\choose n}+ \sum _{1\le i\le t \atop 1\le j,
k \le t-1}{a_t-s-b_i-a_k+a_j+n\choose n}-$$
$$\sum _{1\le i < k \le t-1\atop 1\le j \le
t-1}{a_t-s-a_i-a_k+a_j+n\choose n} -\sum _{1\le i \le t \atop 1\le
j \le t}{b_i-b_j+n\choose n}+\sum _{1\le i\le t \atop 1\le j \le
t-1}{b_i-a_j+n\choose n}+$$$$\sum _{1\le i\le t \atop 1\le j \le k
\le t}{a_t-s+b_i-b_j-b_k+n\choose n}- \sum _{1\le i, k\le t \atop
1\le j \le t-1 }{a_t-s+b_i-b_k-a_j+n\choose n}+$$
$$\sum _{1\le k < j \le t-1\atop 1\le i \le
t}{a_t-s+b_i-a_k-a_j+n\choose n}.$$

\vskip 2mm Since $a_{i-1}>b_i$ and $a_{i}\ge b_{i+3}$ for $1\le i
\le t-3$ (and $a_1\ge b_t$ if $t= 3$), by hypothesis, the last two
sums of binomials vanish. Indeed, to see that
$a_t-s+b_i-b_k-a_j<0$ (resp. $a_t-s+b_i-a_k-a_j<0$) for $1\le
i,k\le t$ and $1\le j\le t-1$ (resp. $1\le i\le t$ and $1\le
k<j\le t-1$), it suffices to show that
$b_t-b_1-a_1<s-a_t=a_1+a_2+\cdots +a_{t-1}-b_1-b_2-\cdots -b_t$
(resp. $b_t-a_1-a_1<s-a_t=a_1+a_2+\cdots +a_{t-1}-b_1-b_2-\cdots
-b_t$) which is rather straightforward to prove.

Moreover, the same type of argument applies to see that
$a_t-s+b_i-b_j-b_k<0$ for all $1\le i \le t$ and $1\le j<k\le t$
and we can replace the summand $\sum _{1\le i\le t \atop 1\le j
\le k \le t}{a_t-s+b_i-b_j-b_k+n\choose n}$ by $\sum _{2\le i\le t
}{a_t-s+b_i-2b_1+n\choose n}$. Putting all together we get
$$\dim W^{t-1}_{t,t}(\underline{b};\underline{a}) = \dim _{(X)}\Hi
^{p(x)}(\PP^n) =$$ $$\sum _{1\le i \le t \atop 1\le j \le
t}{a_j-b_i+n\choose n}-\sum _{1\le i\le t-1 \atop 1\le j \le
t}{a_j-a_i+n\choose n}-\sum _{1\le i \le t \atop 1\le j \le
t}{b_i-b_j+n\choose n}+$$
$$\sum _{1\le i\le t \atop 1\le j \le t-1}{b_i-a_j+n\choose n}-
\sum _{1\le j \le t \atop 1\le i \le k \le
t}{a_t-s-b_i-b_k+a_j+n\choose n}+$$
$$ \sum _{1\le i, j \le t \atop 1\le k \le t-1}{a_t-s-b_i-a_k+a_j+n\choose n}- \sum
_{1\le i < k \le t-1\atop 1\le j \le t}{a_t-s-a_i-a_k+a_j+n\choose
n}+$$ $$\sum _{2\le i\le t}{a_t-s+b_i-2b_1+n\choose n}.$$
\end{proof}

\section{Examples}

We will end this work with some examples where we use Theorem
\ref{mainthm1}. Moreover, these examples show that  the hypothesis
$a_t>a_{t-1}+a_{t-2}-b_1$ cannot be avoided! To handle such cases,
we prove a proposition which estimates the codimension of the
stratum in $\Hi ^{p(x)}(\PP^n)$ of subschemes given by the exact
sequence  (\ref{introMs}).

\begin{example} Let $R=k[x_0,\cdots ,x_5]$ and let
$X=\Proj(A)\subset \PP^5=\Proj(R)$ be a general arithmetically
Gorenstein curve defined by the submaximal minors of a $4\times 4$
matrix whose first 3 rows are linear forms and whose last row are
forms of degree $s-3$ ($s\ge 4$), i.e. $b_i=0$ for $1\le i\le 4$,
$a_j=1$ for $1\le j\le 3$ and $a_4=s-3$. Then, Theorem
\ref{mainthm1} applies provided $s>5$ and we get that $X$ is
unobstructed and
$$\dim W_{4,4 }^3(\underline{0};1,...,1,s-3)=\dim_{(X)}\Hi ^{p(x)}(\PP^5)=12 {6\choose 5}+4{s+2\choose 5}-9{5\choose
  5}-3{s+1\choose 5}-$$ $$16{5\choose 5}-10{s-1\choose 5}+12{s-2\choose
  5}-3{s-2\choose 5}= 2s^3-10s^2+13s+48.$$

Moreover deleting the last row and taking maximal minors, we get a
threefold $Y=\Proj(B)$ with resolution
\begin{equation} \label{resY} 0\longrightarrow  R(-4)^3 \longrightarrow R(-3)^4
\longrightarrow R \longrightarrow B \longrightarrow 0,
\end{equation}
leading to $$H_B(\nu )={\nu +3\choose 3}+2{\nu +2\choose 3}+3{\nu
+1\choose 3}=p_Y(\nu) \mbox{ for }\nu \ge 0.$$  Since $A$ is given
by (\ref{introMs}) with $t=6$ and $M=N_B$, we get $\cO_X\simeq
\omega_X(2s-6)$. Hence $h^1(\cO_X(s-3))=h^0(\cO_X(s-3))$ and the
Hilbert polynomial of $X$ must be of the form $p_X(\nu)=d\nu
+1-g=d(\nu +s-3)$. Looking to (\ref{resY}) we get
$$p_X(s-2)=h^0(\cO_X(s-2))-h^0(\cO_X(s-4))=
$$ $$=h^0(\cO_Y(s-2))-h^0(\cO_Y(s-4))=6s^2-28s+36,$$ i.e.
$d=deg(X)=6s^2-28s+36$ and $g=1+d(s-3)$.

Note that Theorem \ref{mainthm1} takes care of all cases except for $s=4$ and
$s=5$. For these two values of $s$, we can, however, use
Corollary~\ref{introcorNB} (ii) to find $\dim_{(X)}\Hi ^{p(x)}(\PP^5)$ because
$$_0\! \Ext^2_B(N_B,N_B) \simeq \ _0\! \Hom(I_B/I_B^2,H^4_{\goth m}(I_B^2))=0$$
by (\ref{resY}) and Remark \ref{remcorNB}. Indeed $_3H^4_{\goth
  m}(I_B^2)\hookrightarrow \ _3H^6_{\goth m}(R(-8)^3)=0$ by (\ref{I2codim2}).
Hence $X$ is unobstructed,
$$\dim_{(X)}\Hi ^{p(x)}(\PP^5)=2s^3-10s^2+13s+48+\delta$$ where
$\delta=\delta(K_B)_{6-2s}-\delta(N_B)_{-s}$, and moreover, if
$s=5$, then $\delta $ is the codimension of the closure of $W_{4,4
}^3$ in $\Hi^{p(x)}(\PP^5)$. We claim that
$$(\delta(K_B)_{6-2s},\delta(N_B)_{-s})=\begin{cases} (-3,-15) & \mbox{for
}s=4 \\ (0,-12) & \mbox{for }s=5, \end{cases} $$ i.e., $\delta=12$
in both cases.

To find $\delta(K_B)_{6-2s}$ we apply $\Hom_B(-,K_B(6))$ to
(\ref{alghom}) and we get $\ _{-2s}\!\Hom_B(I_B/I_B^2,K_B(6))=0$
and $\ _{-2s}\!\Ext_B^1(I_B/I_B^2,K_B(6))=\
_{-2s}\!\Hom(H_1,K_B(6)).$ Since the rank of $H_1$ is 2, we have
\begin{equation}\label{aux11}
\Hom(H_1,K_B(6))\simeq H_1(\sum_in_{1,i})=H_1(12)
\end{equation}
by \cite{AH} or \cite{KP}; Theorem 8, see the isomorphism
accompanying (\ref{introMs}). Using (\ref{K*codim2}) or more
precisely the exactness of
\begin{equation}\label{aux12} \wedge ^2(R(-3)^4)\longrightarrow
R(-4)^3 \longrightarrow H_1\longrightarrow 0
\end{equation}
(cf. \cite{AH}), we get
$$\delta(K_B)_{6-2s}=-\dim H_1(12)_{-2s}=\begin{cases} -3 &
\mbox{for }s=4 \\ 0 & \mbox{for }s=5. \end{cases} $$

It remains to compute $\delta(N_B)_{-s}$. If we dualize the exact
sequence (\ref{alghom}) we get
$$ 0 \longrightarrow N_B\longrightarrow B(3)^4\longrightarrow
H_1^*\longrightarrow
0$$ to which we apply $\ _{-s}\!\Hom(I_B/I_B^2,-)$. Combining with
$$ \ _{-s}\!\Hom(I_B/I_B^2,H_1^*)\simeq \
  _{-s}\!\Hom(I_B/I_B^2\otimes K_B(6),H_1^*\otimes K_B(6)) \simeq \
  _{-s}\!\Hom(N_B,H_1(12)) $$
 where again we have used
(\ref{aux11}), we get
$$\delta(N_B)_{-s}=4\dim(N_B)_{3-s}-\dim(_{-s}\Hom(N_B(-12),H_1)).$$

\noindent Using (\ref{K*codim2}), we see that  {\small
\begin{equation*}0 \rightarrow \
_{-s}\!\Hom(N_B(-12),K_B(6)^*)\rightarrow \ _{-s}\!
\Hom(N_B(-12),B(-4)^3)\rightarrow \
_{-s}\!\Hom(N_B(-12),H_1)\rightarrow 0
\end{equation*}} is exact because we have
$\Ext^1_B(I_B/I_B^2\otimes K_B,K_B^*)=0$ by \cite{KM}; Lemma 4.9.
Using \cite{KM}; (4.17) we also get the surjectivity of the
natural map $K_B^*\otimes B(-4)^4 \longrightarrow
\Hom_B(I_B/I_B^2\otimes K_B,K_B^*)$. Since we may use
(\ref{aux12}) to see that $(H_1)_\nu\simeq R(-4)^3_\nu\simeq
B(-4)^3_\nu$ for $\nu\le 5$, we get $K_B(6)^*_\nu =0$ for $\nu \le
5$ by (\ref{K*codim2}) and hence
$$\ _{-s}\!\Hom(N_B(-12),K_B^*(-6))\simeq \ _{-s}\!\Hom(I_B/I_B^2\otimes
K_B(6),K_B^*(6))=0$$ for $s\ge 4$. It follows that
$$\ _{-s}\!\Hom(N_B(-12),H_1)\simeq (I_B/I_B^2)^3_{8-s}$$ for $s\ge 4$
which implies (cf. (\ref{NBcodim2}) and (\ref{I2codim2})) that
$$\delta(N_B)_{-s} =\begin{cases} 4\dim(N_B)_{-2}-3\dim(I_B)_3=-12
&
  \text{ for } s=5 \\
  4\dim(N_B)_{-1}-3\dim(I_B)_4=-15 & \text{ for } s=4.
\end{cases}$$ Putting all together we get
$$ \dim_{(X)}\Hi^{p(x)}(\PP^5)=\begin{cases}
  2s^3-10s^2+13s+48=\dim W_{4,4 }^3(\underline{0};\underline{1}) & \text{ for
  } s>5
  \\ 125 & \text{ for } s=5 \\
  80 & \text{ for } s=4. \end{cases}$$ Moreover, applying Corollary
\ref{introcorNB} (ii), we get $\codim
_{\Hi^{p(x)}(\PP^5)}W_{4,4}^3(\underline{0};1,1,1,2)=12$ in the
case $s=5$. Finally, for $s=4$, using Macaulay 2 program
\cite{Mac} we have computed the dimension  $\
_0\!\hom(I_B,I_{A/B})=3$ for $(B \rightarrow A)$ general and hence
$\codim
_{\Hi^{p(x)}(\PP^5)}W_{4,4}^3(\underline{0};\underline{1})=\
_0\!\hom(I_B,I_{A/B}) + \delta=15$.
\end{example}

If $a_t \le a_{t-1}+ a_{t-2}-b_1$ we see in the example above that
$ W_{t,t}^{t-1}(\underline{b};\underline{a})$ is a proper closed
irreducible subset, i.e. the generic curve of the component of
$\Hi^{p(x)}(\PP^5)$ to which $
W_{t,t}^{t-1}(\underline{b};\underline{a})$ belongs is not defined
by submaximal minors of a matrix of forms of degree $a_j-b_i$. The
converse inequality always implies $\dim
W_{t,t}^{t-1}(\underline{b};\underline{a}) = \dim_{(X)}
\Hi^{p(x)}(\PP^n)$ by Theorem \ref{mainthm1}. The pattern above
for small $a_t$ may be typical, but is in general rather difficult
to prove. We illustrate this by two more examples.

\begin{example} Let $X=\Proj(A)\subset \PP^5$ be a general arithmetically
  Gorenstein curve defined by the submaximal minors of a $3\times 3$ matrix
  whose first 2 rows are linear forms and whose last row are forms of degree
  $s-2$ ($s\ge 3$), i.e. $b_i=0$ for $1\le i\le 3$, $a_j=1$ for $1\le j\le 2$
  and $a_3=s-2$. Thanks to
  Proposition~\ref{sectionnormal} % and Theorem \ref{mainthm1}
  the analysis of \cite{K}; Example 43, immediately transfers to our case.
  Hence, for $s>4$ (i.e., $a_t > a_{t-1}+ a_{t-2}-b_1$), we see that $X$ is
  unobstructed and
$$\dim W_{3,3 }^4(\underline{0};1,...,1,s-2)=\dim_{(X)}\Hi^{p(x)}(\PP^5)=
(s+1)(s-1)^2+23.$$ Since  by deleting the last row and taking
maximal minors we get a threefold $Y=\Proj(B)$ for which $\ _0\!
\Ext^2_B(N_B,N_B)=0$, we have the unobstructedness of $X$ also for
$s=3,4$, and
$$(\delta(K_B)_{6-2s},\delta(N_B)_{-s})=\begin{cases} (-1,2) & \mbox{for
  }s=3 \\ (0,-3) & \mbox{for }s=4 \end{cases} .$$ That is, $\delta=-3$ when $s=3$, and $\delta=3$
  when $s=4$. In both cases,
$$\dim_{(X)}\Hi^{p(x)}(\PP^5)=(s+1)(s-1)^2+23 + \delta .$$ Thus
$$ \dim_{(X)}\Hi ^{p(x)}(\PP^5)=\begin{cases}
  36 & \text{ for } s=3 \\
  71 & \text{ for } s=4, \end{cases}$$ see \cite{K}; Example 43 for the
computations. Now, applying Corollary \ref{introcorNB} (ii), we
get $\codim _{\Hi ^{p(x)}(\PP^5)}$
$W_{3,3}^2(\underline{0};1,1,2)=3$ in the case $s=4$. Finally, for
$s=3$, a Macaulay 2 computation shows $\ _0\!\hom(I_B,I_{A/B})=3$
and hence $$\codim
_{\Hi^{p(x)}(\PP^5)}W_{3,3}^2(\underline{0};\underline{1})= \
_0\!\hom(I_B,I_{A/B}) + \delta=0 \ ! $$
\end{example}

In the above examples  we were able to analyse the case $a_t \le a_{t-1}+
a_{t-2}-b_1$ through Corollary~\ref{introcorNB} (ii) because $\
_{s}\!\Ext_B^1(N_B,A)=0$. Since this vanishing may be rare, we want to improve
upon Corollary~\ref{introcorNB} (ii), at least to get estimates of the
codimension of the stratum. We prefer to do it in the generality of Theorem 25
of \cite{K} to extend Theorem 25 in this direction. This leads to the
proposition below. Indeed with assumptions as in Proposition \ref{introcodimprop}, one knows
that the projection morphism $q: D \rightarrow \Hi^{p_Y}(\PP^{n})$ induced by
$(X' \subset Y') \rightarrow (Y')$ is smooth at $(X \subset Y)$ (\cite{K};
Theorem 47). Using that the corresponding tangent map is surjective, we get
Proposition~\ref{introcodimprop} and Remark~\ref{introproprem} (a). Since we
only use these results in Example~\ref{example3} and Remark~\ref{remcor2NB},
we skip the details of the proof which are rather straightforward once having
the results and proofs of \cite{K}. Put
$$ c(I_{A/B}):=\ _{0}\!\ext^1_B(I_B/I_B^2,I_{A/B})- \
_{t}\!\ext_B^2(S^2(I_{A/B}(s)),K_B).$$

\begin{proposition} \label{introcodimprop} Let $B = R/I_B$ be a graded licci
  quotient of $R$, let $M$ be a graded maximal Cohen-Macaulay $B$-module, and
  suppose $\widetilde{M}$ is locally free of rank $2$ in $U:=\Proj(B) -Z$,
  that $\dim { B}-\dim {B}/{I}(Z)\geq 2$ and $\wedge^2{\widetilde {
      M}}\arrowvert_U \simeq \widetilde {K_B}(t)\arrowvert_U$. Let $A$ be
  defined by a regular section $\sigma$ of $\widetilde{M}^*(s)$ on $U$, i.e.
  given by \eqref{introMs}, let $X=\Proj(A)$ and suppose $\
  _s\!\Ext^1_B(M,B)=0$ and $\dim { B} \geq 4$. Moreover let $char(k)=0$, let
  $(B \rightarrow A)$ be general and suppose $(M,B)$ is unobstructed along any
  graded deformation of $B$ and $\ _{-s}\!\Ext^2_B(I_B/I_B^2,M)= 0 \ $. Then
  the codimension, $codi$, of the stratum in $\Hi^{p(x)}(\PP^{n})$ of
  subschemes given by (\ref{introMs}) around $(X)$ satisfies
$$\ c(I_{A/B}) \le codi \le c(I_{A/B})+\ _{0}h^{2}(R,A,A) \le \
_{0}\!\ext^1_B(I_B/I_B^2,I_{A/B}),$$ and $ codi = c(I_{A/B})+\
_{0}h^{2}(R,A,A)$ if and only if $X$ is unobstructed.
\end{proposition}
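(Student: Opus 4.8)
The plan is to reduce Proposition~\ref{introcodimprop} to a statement about the projection morphism $q: D \rightarrow \Hi^{p_Y}(\PP^n)$ and its fibers, exactly along the lines of Theorem 25 and Theorem 47 of \cite{K}. The key structural fact is that $D$ parameterizes pairs $(X' \subset Y')$ arising from the construction \eqref{introMs}, and that the stratum in $\Hi^{p(x)}(\PP^n)$ we want to measure is the image $p(U)$ of a suitable open $U \ni (X \subset Y)$ under the other projection $p: D \rightarrow \Hi^{p(x)}(\PP^n)$. So $codi$ is the codimension of $\overline{p(D)}$ locally at $(X)$, and the whole proof is an exercise in comparing tangent spaces and dimensions at the three points $(X \subset Y) \in D$, $(X) \in \Hi^{p(x)}$ and $(Y) \in \Hi^{p_Y}$.

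First I would establish the smoothness of $q$ at $(X \subset Y)$, which is given by \cite{K}; Theorem 47 under the stated hypotheses ($M$ maximal CM of rank 2, $\wedge^2 \widetilde M|_U \simeq \widetilde{K_B}(t)|_U$, the depth condition $\dim B - \dim B/I(Z) \geq 2$, the vanishings $\ _s\!\Ext^1_B(M,B)=0$ and $\ _{-s}\!\Ext^2_B(I_B/I_B^2,M)=0$, and that $(M,B)$ is unobstructed along graded deformations of $B$). Smoothness of $q$ means the tangent map $T_{(X\subset Y)}D \rightarrow T_{(Y)}\Hi^{p_Y}$ is surjective and $D$ is smooth of the expected relative dimension at $(X \subset Y)$; in particular $\dim_{(X\subset Y)} D = \dim_{(Y)}\Hi^{p_Y} + (\text{fiber dimension})$, where the fiber dimension is computed from the space of regular sections modulo automorphisms, essentially $\dim (M^*(s))_0$-type data. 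Then I would use the other projection $p$ and the fact (\cite{K}; Lemma 29, quoted in the text) that $p(U)$ is the stratum whose codimension is $codi$. The codimension of $p(D)$ in $\Hi^{p(x)}$ at $(X)$ equals $\dim_{(X)}\Hi^{p(x)} - \dim p(U)$, and $\dim p(U) = \dim_{(X \subset Y)} D - (\text{fiber dimension of } p)$, where the generic fiber of $p$ over its image records how many flags $(X \subset Y')$ carry the \emph{same} $X$; since $(B \to A)$ is general, this fiber dimension is controlled and contributes the difference between $\ _0\!\ext^1_B(I_B/I_B^2,I_{A/B})$ and $c(I_{A/B})$.

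The heart of the estimate is the identification of $c(I_{A/B})$ and of $\ _0h^2(R,A,A)$ with the relevant cohomological quantities governing $\dim_{(X)}\Hi^{p(x)}$ versus $\dim_{(X\subset Y)} D$. Concretely, $\ _0h^2(R,A,A)$ is (an upper bound for) the obstruction space of $A$ at $\mathfrak m$, so that $\dim_{(X)}\Hi^{p(x)} \leq \dim_0 H^1(R,A,A) \leq \dim_0 H^1(R,A,A)|_{\text{unobstr.}} + \ _0h^2(R,A,A)$, with equality in the first inequality precisely when $X$ is unobstructed. Comparing the tangent space $T_{(X)}\Hi^{p(x)} = \ _0\!\hom_A(I_A/I_A^2, A)$ computed via the flag description against the expected dimension coming from $c(I_{A/B})$ yields the lower bound $c(I_{A/B}) \leq codi$, while $codi \leq c(I_{A/B}) + \ _0h^2(R,A,A)$ follows from the obstruction space bounding the failure of $p(D)$ to attain its expected dimension. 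The final inequality $c(I_{A/B}) + \ _0h^2(R,A,A) \leq \ _0\!\ext^1_B(I_B/I_B^2,I_{A/B})$ comes from the exact sequence relating $\ _t\!\Ext_B^2(S^2(I_{A/B}(s)),K_B)$ to $\ _0h^2(R,A,A)$ (the map induced by $\sigma$, as in Remark~\ref{remcorNB} and the proof of Corollary~\ref{introcorNB}).

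The main obstacle I expect is the precise bookkeeping that turns the equality ``$codi = c(I_{A/B}) + \ _0h^2(R,A,A)$ iff $X$ is unobstructed'' into a clean iff. The forward direction (unobstructed $\Rightarrow$ equality) follows because smoothness of $\Hi^{p(x)}$ at $(X)$ forces the image $p(D)$ to have exactly its expected dimension, so the two sandwiching inequalities collapse onto the obstruction-corrected value. The reverse direction requires showing that if $codi$ attains the upper value $c(I_{A/B}) + \ _0h^2(R,A,A)$, then all of $\ _0h^2(R,A,A)$ is genuinely realized as obstruction and hence $\dim_{(X)}\Hi^{p(x)}$ equals its tangent-space dimension, i.e. $X$ is unobstructed. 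This delicate step is exactly where the ``$(B \to A)$ general'' hypothesis enters (see Remark~\ref{correct}): one needs the relevant $\hom$-numbers to attain their minimal values so that the dimension count is sharp and the obstruction space is not artificially inflated. Since these arguments are routine modulo the machinery already developed in \cite{K}, and since we only apply the proposition in Example~\ref{example3} and Remark~\ref{remcor2NB}, I would only sketch this bookkeeping rather than reproduce the full deformation-theoretic apparatus of Theorems 25 and 47.
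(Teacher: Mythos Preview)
Your proposal is correct and follows essentially the same approach as the paper: the paper itself only gives a one-line sketch, stating that smoothness of $q:D\to\Hi^{p_Y}(\PP^n)$ at $(X\subset Y)$ (from \cite{K}; Theorem 47) together with surjectivity of the tangent map yields the proposition, with details ``rather straightforward once having the results and proofs of \cite{K}.'' Your elaboration---using the two projections $p$ and $q$, the identification of the stratum as $p(U)$ via \cite{K}; Lemma 29, the role of $_0h^2(R,A,A)$ as the obstruction correction, and the entry of the ``$(B\to A)$ general'' hypothesis via Remark~\ref{correct}---fills in exactly the bookkeeping the paper omits.
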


Here ``$(M,B)$ unobstructed along any graded deformation of $B$'' means that
for every graded deformation $(M_S,B_S)$ of $(M,B)$, $S$ local and Artinian
with residue field $k$, there is a graded deformation of $M_S$ to any graded
deformation $B_T$ of $B_S$ for any small Artin surjection $T \rightarrow S$
(\cite{K}; Definition 11). The important remark for our application $M=N_B$
where the codimension $2$ CM quotient $B$  satisfies $\depth_{I(Z)}B
\geq 4$, is that {\it all} assumptions of the proposition are satisfied
provided $char(k)=0$ and $(B \rightarrow A)$ is general (see proof of
Corollary 41 and Remark 42 of \cite{K}).

Moreover recall that if we put $\delta:=\ -\delta(I_{A/B})_{0} = \
_{0}\!\ext^1_B(I_B/I_B^2,I_{A/B})- \ _{0}\!\hom_R(I_B,I_{A/B})$ and we use the
exact sequence \eqref{Ns},
we get $\delta=\ \delta(K_B)_{t-2s} -\delta(N_B)_{-s} \ ,$ as previously.

\begin{remark} \label{introproprem} a) With assumptions as in
  Proposition~\ref{introcodimprop}, except for $(B \rightarrow A)$ being
  general we can also show

  \hspace{4 cm} $\delta - \ _{t}\!\ext_B^2(S^2(I_{A/B}(s)),K_B) \le codi
  \ . $ \\[0.2cm]
  b) Moreover if $\depth_{I(Z)}B \geq 4$, then we show
  $$\ _{t}\!\Ext_B^2(S^2(I_{A/B}(s)),K_B) \simeq \ _{s}\!\Ext_B^1(M,A)$$
  exactly as we did for $M=N_B$ in the proof of Corollary~\ref{introcorNB}.
  Hence if $ _{s}\!\Ext_B^1(M,A)=0$, then the lower bound $ c(I_{A/B})$ of
  Proposition~\ref{introcodimprop} is equal to the upper bound and we essentially
  get Corollary~\ref{introcorNB} (ii)! Moreover since $codi \ge 0$,
  Corollary~\ref{introcorNB} (i) corresponds to the case where the upper bound
  is zero!
\end{remark}

\begin{remark} \label{remcor2NB} In the case $s> \max n_{2,j}/2$,
  $\depth_{I(Z)}B \geq 4$ and $char(k)=0$, the inequalities of
  Proposition~\ref{introcodimprop} lead to
  $$
  \epsilon + \delta - \ _0\!\ext_B^1(N_B,A) \leq \ \dim_{(X)}
  \Hi^{p(x)}(\PP^{n}) \leq \epsilon + \delta \
  $$  with $ \epsilon$ as in Corollary~\ref{introcorNB}.
%(and that $X$ is unobstructed if and only if
  % the right hand inequality is an equality) without assuming $s > 2 \max
  % n_{2,j}- \min n_{1,i}$
 \end{remark}

 \begin{example} \label{example3} Now let $X=\Proj(A)\subset \PP^5$ be a
   general arithmetically Gorenstein curve defined by the submaximal minors of
   a $3\times 3$ matrix whose first (resp. second) row consists of linear
   (resp. quadratic) forms and whose last row are forms of degree $s-3$ ($s\ge
   5$), i.e. $b_i=0$ for $1\le i\le 3$, $a_1=1, a_2=2$ and $a_3=s-3$. In the
   following we skip a few details which we leave to the reader. Note that the
   case $a_t > a_{t-1}+ a_{t-2}-b_1$ or equivalently $s>6$, is taken care of
   by Theorem \ref{mainthm1}. So we concentrate on the cases $s=5$ and 6,
   which we analyse by using Proposition~\ref{introcodimprop} and
   Remark~\ref{introproprem}. First, we use Remark
   \ref{remcorNB}
   to compute $\ _0\! \ext^2_B(N_B,N_B)$
where $B$ is obtained by deleting the last row and taking maximal
minors. We easily get $\ _0\! \ext^2_B(N_B,N_B) =\  _s\!
\ext^1_B(N_B,A)=3$ by using $$ 0\longrightarrow R(-5) \oplus R(-4)
  \longrightarrow R(-3)^3 \longrightarrow R \longrightarrow B \longrightarrow
  0,
$$
\eqref{I2codim2} and $\ _0\!\Ext_B^2(N_B,N_B)\ \simeq \
_0\!\Hom_B(I_B/I_B^2, \HH^4_{\mathfrak m}(I_B^2))$. Moreover
$\dim(K_B)_{6-2s}=0$ by \eqref{Kcodim2}. Now if we apply $ \
_{-2s}\!\Hom(-,K_B(6))$ to (\ref{alghom}) we get
$\delta(K_B)_{6-2s}=0$ and $ _{-2s}\!\Ext_B(I_B/I_B^2,K_B(6))=0$
for $s \ge 5$ provided we can show $ \
_{-2s}\!\Hom(H_1,K_B(6))=0$. Using (\ref{alghom}) we get that
$H_1$ has rank 1 and $H_1 \simeq K_B(-3)$. Hence $ \
_{-2s}\!\Hom(H_1,K_B(6)) \simeq B(9)_{-2s}=0$ for $s \ge5$.

It remains to compute $\delta(N_B)_{-s}$. We claim that $\delta(N_B)_{-s}=-8$
(resp. $\delta(N_B)_{-s}=-3$) for $s=5$ (resp. $s=6$). Indeed dualizing the
exact sequence (\ref{alghom}) we get
$$ 0 \longrightarrow N_B\longrightarrow B(3)^3\longrightarrow
H_1^*\longrightarrow 0 \ .$$ If we apply $\
_{-s}\!\Hom(I_B/I_B^2,-)$ to this sequence, recalling $H_1 \simeq
K_B(-3)$ and hence $ \ _{-s}\!\Hom(I_B/I_B^2,H_1^*)\simeq \
(I_B/I_B^2)_{9-s}$, we get an exact sequence which rather easily
proves the claim. It follows that the numbers $\delta =
\delta(K_B)_{6-2s}-\delta(N_B)_{-s}$ and $\ _s\! \ext^1_B(N_B,A)$
appearing in Remark~\ref{introproprem} are computed. We conclude,
for $s=5$, that the codimension, $codi$, of the stratum in $\Hi
^{p(x)}(\PP^{5})$ of subschemes given by (\ref{introMs}) around
$(X)$ is at least 5-dimensional. In fact a Macaulay 2 computation
shows $_{0}h^{2}(R,A,A)=0$ and $_0\!\hom(I_B,I_{A/B})=1$ and hence
we have $ codi = c(I_{A/B})+\ _{0}h^{2}(R,A,A) = 6$ by
Proposition~\ref{introcodimprop}. For $s=6$ the lower bound for
$codi$ of Remark~\ref{introproprem} (a) is $0$. Since a Macaulay 2
computation shows $_0\!\hom(I_B,I_{A/B})=0$ the better lower bound
of Proposition~\ref{introcodimprop} is also 0 while the smallest
upper bound of Proposition~\ref{introcodimprop} is 3. The latter
is the correct bound for the codimension of the stratum provided
$X$ is unobstructed. In conclusion if $X$ belongs to a reduced
component $V$ of $\Hi^{p(x)}(\PP^{5})$, then $codi=3$, but
$codi=0$ is possible in which case $V$ is non-reduced. We have not
been able to fully tell what happens, but we expect $V$ to be
reduced and $codi=3$.
\end{example}

The last case of the preceding example illustrates how difficult the analysis
of when $codi$ is positive could be. Especially cases where $a_t$ is close to
$a_{t-1}+ a_{t-2}-b_1$ seem difficult to handle. Since it turns out that the
lower bounds of Proposition~\ref{introcodimprop} and Remark~\ref{introproprem}
(a) are often negative (also in the case $a_t > a_{t-1}+ a_{t-2}-b_1$ treated
in Theorem \ref{mainthm1}), they are not very helpful. This, however, also
indicates that the conclusions of Theorem \ref{mainthm1} are rather strong.

\end{document}